\documentclass{amsart}
\usepackage{amsmath, amssymb,epic,graphicx,mathrsfs,enumerate}
\usepackage[all]{xy}
\usepackage{color}
\usepackage{comment}

\usepackage{amsthm}
\usepackage{amssymb}
\usepackage{latexsym}
\usepackage{longtable}
\usepackage{epsfig}
\usepackage{amsmath}
\usepackage{hhline}


 \DeclareMathOperator{\perm}{Sym}
 \DeclareMathOperator{\soc}{soc}
\DeclareMathOperator{\aut}{Aut}

 \DeclareMathOperator{\frat}{Frat}

\DeclareMathOperator{\sym}{Sym}

\DeclareMathOperator{\End}{End}

\renewcommand{\emptyset}{\varnothing}

\newtheorem{thm}{Theorem}

 \newtheorem{lemma}[thm]{Lemma}
\newtheorem{prop}[thm]{Proposition} 
 
\newtheorem{question}[thm]{Question}

\numberwithin{equation}{section}

\renewcommand{\footnote}{\endnote}
\newcommand{\ignore}[1]{}\makeglossary

\begin{document}
	\bibliographystyle{amsplain}
	\title{The independence graph of a finite group}

	\author{Andrea Lucchini}
	\address{Andrea Lucchini\\ Universit\`a degli Studi di Padova\\  Dipartimento di Matematica \lq\lq Tullio Levi-Civita\rq\rq\\ Via Trieste 63, 35121 Padova, Italy\\email: lucchini@math.unipd.it}
	

\begin{abstract} Given a finite group $G,$ we denote by $\Delta(G)$ the graph whose vertices are the elements $G$ and where two vertices $x$ and $y$ are adjacent if there exists a minimal generating set of $G$ containing $x$ and $y.$ We prove that $\Delta(G)$ is connected and classify the groups $G$ for which $\Delta(G)$ is a planar graph.	\end{abstract}
	\maketitle

\section{Introduction}

The generating  graph of a finite group $G$ is the graph defined on the elements of $G$ in such a way that two distinct vertices are connected by an edge if and only if they generate $G.$  It was defined by Liebeck and
Shalev in \cite{LS}, and has been further investigated by
many authors: see for example \cite{ bglmn,  bucr, tra, CL3,  fu, diam, LM2, lm, LMRD} for some
of the range of questions that have been considered. Clearly the generating graph of $G$ is an edgeless graph if $G$ is not 2-generated.  We propose and investigate a possible generalization, that gives useful information even when $G$ is not 2-generated.

Let $G$ be a finite group. A generating set $X$ of $G$ is said to be minimal if no proper
subset of $X$ generates $G$.
We denote by $\Gamma(G)$  
the graph whose vertices are the elements $G$ and in which two vertices $x$ and $y$ are joined by an edge if and only if $x\neq y$ and there exists a minimal generating set of $G$ containing $x$ and $y$. Roughly speaking,  $x$ and $y$ are adjacent vertices of $\Gamma(G)$ is they are \lq independent\rq, so we  call $\Gamma(G)$ the {\slshape{independence graph}} of $G$. We will denote by $V(G)$ the set of the non-isolated vertices of $\Gamma(G)$ and by $\Delta(G)$ the subgraph of $\Gamma(G)$ induced by $V(G).$ Our main result is the following.

\begin{thm}\label{main}
	If $G$ is a finite group, then the graph $\Delta(G)$ is connected.
\end{thm}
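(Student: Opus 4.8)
The plan is to analyze the structure of $V(G)$ and show that any non-isolated vertex can be connected by a path to a fixed "reference" vertex. First I would establish the basic dictionary: $x \in V(G)$ if and only if $x$ belongs to some minimal generating set of $G$, equivalently (since every generating set contains a minimal one) there exists a subset $Y$ with $x \notin \langle Y\rangle$ and $\langle Y, x\rangle = G$; in particular $x \in V(G)$ forces $x \notin \Phi(G)$. The natural reduction is to pass to $G/\Phi(G)$: since a subset is a minimal generating set of $G$ exactly when its image is a minimal generating set of $G/\Phi(G)$ together with the fact that $\Phi(G)$-cosets behave uniformly, I expect that the map $x \mapsto x\Phi(G)$ carries edges to edges and that connectivity of $\Delta(G)$ reduces to a statement where $\Phi(G) = 1$. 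I would also handle the trivial and cyclic cases separately (if $G$ is cyclic, $\Delta(G)$ consists of the generators, pairwise non-adjacent unless... actually a single generator is already a minimal generating set, so I must be careful: an element $x$ of a cyclic group is non-isolated only if $G$ is generated by two non-generators or... this edge case needs the precise convention and I would nail it down first).

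**Next**, with $\Phi(G) = 1$ assumed, I would use the well-known fact (Tarski-style / matroid-like exchange, valid because "being a minimal generating set" gives the independent sets of a combinatorial structure on $G$) that all minimal generating sets of $G$ have the same size only when $G$ is, e.g., a $p$-group — in general sizes vary, but the key tool is: if $X = \{x_1,\dots,x_d\}$ is a minimal generating set and $y \notin \langle X \setminus \{x_i\}\rangle$, then $(X \setminus \{x_i\}) \cup \{y\}$ is again a minimal generating set, so $y$ is adjacent to every $x_j$ with $j \neq i$. This "local exchange" is the engine for building paths. The strategy: given $u, v \in V(G)$, pick minimal generating sets $U \ni u$ and $V' \ni v$. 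If $U$ or $V'$ has size $\geq 3$, then $u$ (resp. $v$) is adjacent to at least two elements of its own set and I can start walking. The genuinely delicate configurations are the $2$-generated pieces: if $u$ only ever appears in minimal generating sets of size exactly $2$, say $\{u, u'\}$, then $u$'s neighborhood is $\{u' : \langle u, u'\rangle = G,\ u, u' \notin \Phi(G)\}$, which is precisely a neighborhood in the classical generating graph.

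**The hard part will be** gluing the "size-$2$ world" to the "size-$\geq 3$ world" and handling groups that are $2$-generated overall. For $2$-generated $G$, $\Delta(G)$ is essentially the non-isolated part of the generating graph, and its connectivity is a known (nontrivial) theorem — I would cite or reprove via Crestani–Lucchini-type arguments that for a $2$-generated finite group the generating graph restricted to non-isolated vertices is connected (using the solvable reduction to $G/\Phi(G)$ and then the structure of the crown-based power / chief factors, plus the classification-dependent input for the almost simple case). For general $G$, I would argue that if $d(G) \geq 3$ then every element of $V(G)$ lies in a minimal generating set of size $\geq 3$: indeed if $x \in V(G)$ via a size-$2$ set $\{x, y\}$, I would try to enlarge — but $\langle x,y\rangle = G$ means $d(G) \le 2$, contradiction, so when $d(G) \geq 3$ there are no size-$2$ minimal generating sets at all, and then the exchange argument alone suffices: any two non-isolated vertices $u,v$ lie in minimal generating sets $U, V'$ of size $\geq 3$, each $u,v$ is adjacent to $\geq 2$ members of its set, and one shows $U$ and $V'$ can be linked by a short chain of exchanges (swap elements of $U$ toward $v$ one at a time, each intermediate set still minimal generating). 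Thus the proof splits cleanly as: (i) reduce to $\Phi(G) = 1$; (ii) if $d(G) \leq 2$, invoke generating-graph connectivity; (iii) if $d(G) \geq 3$, run the exchange/path-building argument, whose only subtlety is verifying that the intermediate swaps stay non-redundant, which follows from choosing at each step an element outside the span of the current set minus the element being removed.
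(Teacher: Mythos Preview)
Your proposal has genuine gaps in both branches of the case split.

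The exchange principle you state is false: from $y\notin\langle X\setminus\{x_i\}\rangle$ you cannot conclude that $(X\setminus\{x_i\})\cup\{y\}$ generates $G$, let alone minimally. In $\sym(4)$ take the minimal generating set $X=\{(1,2),(1,2,3,4)\}$ and $y=(1,3)$: then $y\notin\langle(1,2,3,4)\rangle$, yet $\langle(1,2,3,4),(1,3)\rangle$ is dihedral of order~$8$. Minimal generating sets of a finite group do not form the independent sets of a matroid (their cardinalities already vary), so no clean exchange is available, and Tarski's irredundant basis theorem says nothing of the sort. Your ``chain of swaps from $U$ to $V'$'' in the $d(G)\ge 3$ case therefore has no foundation; what you are implicitly invoking is connectivity of a swap-type graph on minimal generating sets, which is known for soluble groups (and enters the paper's proof of Theorem~\ref{conu}) but is not available in general.

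For $d(G)=2$ you assert that $\Delta(G)$ is ``essentially the non-isolated part of the generating graph'' and that its connectivity is a known theorem. Neither holds. Even when $d(G)=2$, the graph $\Delta(G)$ properly contains $\Delta_2(G)$: in $\sym(4)$ the double transposition $(1,2)(3,4)$ lies in $V(G)$ via a minimal generating set of size $3$, yet is isolated in the generating graph. And connectivity of $\Delta_2(G)$ is proved in \cite{CL3} only for soluble $G$; the unsoluble case is not a citable result. The paper's proof avoids both obstacles by inducting on $|G|$ through a minimal normal subgroup $N$, lifting connectedness of $\Delta(G/N)$ to $\Delta(G)$ and thereby reducing to groups with $G/N$ cyclic for every minimal normal $N$: these are cyclic, $C_p\times C_p$, or monolithic with non-central socle. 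The non-abelian monolithic case is then handled via the spread theorem of Breuer--Guralnick--Kantor (which depends on the classification of finite simple groups) to link any two elements that generate $G$ together with the socle. That classification-dependent input is the real engine for the insoluble part, and your outline supplies no substitute for it.
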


We prove a stronger result in the case of finite soluble groups. For a positive integer $u$, we denote by $\Gamma_u(G)$ the subgraph of $\Gamma(G)$ in which $x$ and $y$ are joined by an edge if and only if there exists a minimal generating set of size $u$ containing $x$ and $y.$ As before, we denote by $\Delta_u(G)$ the subgraph of $\Gamma_u(G)$ induced by the set $V_u(G)$ of its non-isolated vertices. Notice that, even when $G$ is $u$-generated, the set $V_u(G)$ is in general different from $V(G).$ For example if  $G=\sym(4),$ then $\{(1,2)(3,4), (1,2), (1,2,3)\}$ is a minimal generating set for $G,$ so $(1,2)(3,4)\in V(G);$ however  $(1,2)(3,4)\notin V_2(G).$ If $G$ is a non-cyclic 2-generated group, then $\Gamma_2(G)$ coincides with the generating graph of $G$ and it follows from \cite[Theorem 1]{CL3} that $\Delta_2(G)$ is a connected graph if $G$ is soluble. We generalize this result in the following way.

\begin{thm}\label{conu}
	If $u\in \mathbb N$ and $G$ is a finite soluble group, then $\Delta_u(G)$ is connected.
\end{thm}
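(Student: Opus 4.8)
The plan is to argue by induction on $|G|$, after two reductions. First, one may assume $u\ge 2$ and that $G$ has a minimal generating set of size $u$ (so in particular $d(G)\le u$), since otherwise $V_u(G)=\emptyset$ and there is nothing to prove. Second, $V_u(G)$ is exactly the union of the minimal generating sets of $G$ of size $u$, and any two distinct elements of one such set are adjacent in $\Gamma_u(G)$, so the elements of a given minimal generating set of size $u$ all lie in one component of $\Delta_u(G)$; hence it is enough to prove that the intersection graph $\mathcal{I}_u(G)$ — with vertex set the minimal generating sets of $G$ of size $u$, two of them joined by an edge when they meet — is connected. Equivalently, one must show that any two minimal generating sets of size $u$ are linked by a chain of such sets in which consecutive members differ in a single element.

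For the induction, let $N$ be a minimal normal subgroup of $G$; as $G$ is soluble, $N$ is an elementary abelian $p$-group, and $N$ is either contained in $\Phi(G)$ or complemented in $G$. If $\Phi(G)\ne 1$, I would choose $N\le\Phi(G)$. Since the elements of $\Phi(G)$ are non-generators, every lift of a minimal generating set of $G/N$ of size $u$ is a minimal generating set of $G$ of size $u$, and conversely the image of a minimal generating set of $G$ of size $u$ is a minimal generating set of $G/N$ of size $u$; thus reduction modulo $N$ gives a surjection $\mathcal{I}_u(G)\to\mathcal{I}_u(G/N)$ whose fibre over $\{g_1N,\dots,g_uN\}$ is the set of lifts $\{g_1n_1,\dots,g_un_u\}$ with $n_i\in N$. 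Any two lifts in a single fibre are joined by successively replacing $g_in_i$ by $g_in_i'$, a sequence of single-element moves through minimal generating sets; and an edge of $\mathcal{I}_u(G/N)$ lifts to an edge of $\mathcal{I}_u(G)$ once one coordinate is adjusted so that two lifts share an element. Since $\mathcal{I}_u(G/N)$ is connected by induction, so is $\mathcal{I}_u(G)$.

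It remains to treat the case $\Phi(G)=1$, in which $N$ is complemented: $G=N\rtimes H$ with $H\cong G/N$, and $H$ is a maximal subgroup of $G$, since $N$ is irreducible as an $H$-module and so has no $H$-invariant subgroup besides $1$ and $N$. By the crown theory for finite soluble groups, $d(G)\in\{d(G/N),d(G/N)+1\}$. If $M=\{g_1,\dots,g_u\}$ is a minimal generating set of $G$, then minimality forces exactly one of: (i) the image $\overline M$ is a minimal generating set of $G/N$ of size $u$; or (ii) for some $i$ the set $M\setminus\{g_i\}$ generates, minimally, a complement of $N$ and so is a minimal generating set of size $u-1$ of a group isomorphic to $G/N$. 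For the sets of kind (i) lying over a fixed $\overline M$, I would use Gaschütz's lemma to produce a lift generating $G$ and then, via a counting argument for the affine action of $N^u$ on the set of lifts — the lifts failing to generate $G$ being in bijection with the complements of $N$ — show that any two generating lifts are joined through minimal generating sets by single-element moves; combined with induction for $G/N$ at parameter $u$, this gathers all sets of kind (i) into one component of $\mathcal{I}_u(G)$. For kind (ii), the sets sharing a fixed minimal generating set $S$ of size $u-1$ of a complement all meet in $S$; I would connect these across varying $S$ — and, where necessary, across non-conjugate complements of $N$ — using induction for $G/N$ at parameter $u-1$ (legitimate since $u-1\ge 1$) together with suitable extension moves, and show that each set of kind (ii) is joined to one of kind (i) whenever these exist, placing all of kind (ii) in the same component. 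The degenerate situations in which only one of (i), (ii) can occur (such as $u=d(G)$) are immediate.

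The crux is the complemented case, and its two genuinely delicate points are as follows. The first is the fibre-connectivity for kind (i): proving that the lifts of a fixed minimal generating set of $G/N$ which generate $G$ remain connected through minimal generating sets under single-element replacements, for which one needs the precise count of such lifts inside the $H$-module $N$, in the spirit of Gaschütz's theorem. The second is the interplay between the two kinds: passing from a set of kind (ii) to one of kind (i) by a single replacement, and — in the range where no set of kind (i) exists — linking sets of kind (ii) attached to non-conjugate complements of $N$. Finally, the sub-case in which $N$ is central in $G$ (trivial module, $G=N\times H$, with the complements forming the translation group $\operatorname{Hom}(H,N)$) has a slightly different combinatorics of which coordinates may be moved freely, but it is the easier one and can be treated separately.
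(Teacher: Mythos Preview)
Your outline has the right shape but leaves unproved precisely the step that carries all the weight. The ``fibre connectivity'' for sets of kind (i) --- that the lifts of a fixed minimal generating $u$-set of $G/N$ which generate $G$ are connected by single-element replacements --- is not a counting argument: knowing the \emph{number} of bad lifts (those landing in a complement of $N$) does not tell you that the bad set fails to separate the good lifts in the Hamming-$1$ graph on $N^u$. This statement is exactly the inductive step in proving that the swap graph $\Sigma_{d}(G)$ of a finite soluble group is connected, which is a theorem in its own right (Di~Summa--Lucchini, cited in the paper as \cite{ds}); you neither invoke it nor supply a proof. Your remark that the degenerate situations ``such as $u=d(G)$'' are immediate is in tension with your own identification of fibre connectivity as the crux: when $u=d(G)=d(G/N)$ only kind (i) occurs, so connectivity reduces entirely to fibre connectivity plus induction --- that is, to the swap-graph theorem itself. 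The linkage of kind-(ii) sets across non-conjugate complements via ``induction for $G/N$ at parameter $u-1$'' is likewise only a gesture: minimal generating sets of two different complements live in different subgroups of $G$, and induction on $G/N$ does not by itself produce a path in $\mathcal I_u(G)$ between them.

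The paper proceeds differently. It invokes the swap-graph connectivity result directly to settle the case $u=d(G)$ (Proposition~\ref{swa}). For $u>d(G)$ it avoids fibre arguments altogether: since $d(K)\le d(G)<u$ and $m(K)=m(G)-1\ge u-1$, Tarski's irredundant-basis theorem yields a minimal generating set $\{k_1,\dots,k_{u-1}\}$ of a complement $K$, whence $\{k_1,\dots,k_{u-1},m\}\in\Omega_u(G)$ for every $1\neq m\in N$, so all of $N\setminus\{1\}$ lies in a single component $\Gamma$ of $\Delta_u(G)$. The proof then shows that every $X\in\Omega_u(G)$ meets $\Gamma$, splitting on whether the projection of $X$ to $K$ is minimally generating of size $u$ or of smaller size; the harder subcase uses the refinement of Tarski (Lemma~\ref{tar}) producing $\{z_1,\dots,z_u\}\in\Omega_u(K)$ with $\{z_1z_2,z_3,\dots,z_u\}\in\Omega_{u-1}(K)$, together with the lifting Lemma~\ref{basic} and induction on $G/N$. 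Your dichotomy (i)/(ii) matches this case split, but the paper's device of anchoring everything to the component containing $N\setminus\{1\}$ replaces your unproved fibre-connectivity step.
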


Recall that a graph is said to be embeddable in the plane, or planar, if it can be drawn in the plane so that its edges intersect only at their ends. The 2-generated finite groups whose generating graph is planar have been classified in \cite{planar}. Our next result gives a classification of the finite groups $G$ such that $\Gamma(G)$ is a planar graph.

\begin{thm}Let $G$ be a finite group. Then $\Gamma(G)$ is planar if and only either $G\in \{C_2\times C_2, C_2\times C_4, D_4, Q_8, \perm(3)\}$ or $G=C_n$ is cyclic of order $n$ and one of the following occurs:
	\begin{enumerate}
		\item $n$ is a prime-power.
		\item $n=p\cdot q,$ where $p$ and $q$ are distinct primes and $p\leq 3$.
		\item $n=4\cdot q,$ where $q$ is an odd prime.
	\end{enumerate}
\end{thm}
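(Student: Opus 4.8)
The plan is to reduce everything to the invariant $m(G)$, the maximal size of a minimal generating set, and then run a short case analysis. Since isolated vertices are irrelevant to planarity, $\Gamma(G)$ is planar if and only if $\Delta(G)$ is. Two reductions are used throughout. First, a minimal generating set of size $u$ induces a clique $K_u$ in $\Gamma(G)$, so $m(G)\ge 5$ already forces $K_5\subseteq\Gamma(G)$. Second, for $\pi\colon G\to\overline G:=G/\Phi(G)$, the fact that $\Phi(G)$ consists of non-generators shows that a subset of $G$ is a minimal generating set exactly when $\pi$ maps it bijectively onto a minimal generating set of $\overline G$; consequently $x\sim y$ in $\Gamma(G)$ if and only if $\pi(x)\ne\pi(y)$ and $\pi(x)\sim\pi(y)$ in $\Gamma(\overline G)$. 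Thus $\Gamma(G)$ is obtained from $\Gamma(\overline G)$ by blowing up each vertex to an independent set of size $|\Phi(G)|$ and joining two such sets completely whenever the corresponding vertices of $\Gamma(\overline G)$ are adjacent; in particular $m(\overline G)=m(G)$, the graph $\Gamma(\overline G)$ embeds in $\Gamma(G)$, and if $|\Phi(G)|\ge 3$ and $\Gamma(\overline G)$ has an edge then $K_{3,3}\subseteq\Gamma(G)$.

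The main step is the claim that $m(G)\ge 3$ implies $\Gamma(G)$ is not planar; by the clique observation it suffices to treat $m(G)\in\{3,4\}$, and we split according to $d(G)$, the minimal number of generators of $G$. If $d(G)\ge 3$, fix a generating set $\{x_1,\dots,x_{d(G)}\}$ of minimal size; using minimality of $d(G)$ one checks that replacing $x_1$ by $x_1x_2$, or by $x_1x_2x_3$, and similarly replacing $x_2$ or $x_3$, always yields a minimal generating set, and hence that the five pairwise distinct elements $x_1,x_2,x_3,x_1x_2,x_1x_2x_3$ are pairwise adjacent in $\Gamma(G)$, so $K_5\subseteq\Gamma(G)$. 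If $d(G)=1$, then $G=C_n$ is cyclic with $n$ divisible by at least three distinct primes; in $C_n/\Phi(C_n)\cong\prod_i C_{p_i}$ the non-isolated vertices are classified by the ``support'' of an element (the set of nonzero coordinates), two being adjacent precisely when their supports are incomparable, and one finds two incomparable supports whose classes each contain at least three elements, giving $K_{3,3}\subseteq\Gamma(C_n)$. Finally, if $d(G)=2$ then $G$ is non-cyclic and $2$-generated with $m(G)\ge 3$, so $G$ is not one of the five groups listed below (which all have $m(G)=2$); hence, by the classification in \cite{planar} of the $2$-generated groups with planar generating graph, the generating graph of $G$ --- a subgraph of $\Gamma(G)$ --- is not planar. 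The case $d(G)\ge 3$ involves the most verification, and getting the minimality checks right uniformly is the main technical point, though it stays entirely elementary.

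Granting the main step, $\Gamma(G)$ planar forces $m(G)\le 2$. If $m(G)\le 1$ then $G$ is trivial or cyclic of prime-power order and $\Gamma(G)$ is edgeless, which is alternative (1). Suppose now $m(G)=2$ and $G=C_n$ is cyclic, so $n=p^aq^b$ with $p<q$ distinct primes. Working in $C_n/\Phi(C_n)\cong C_p\times C_q$ one sees that the non-isolated vertices of $\Gamma(C_n)$ are the elements whose order is a multiple of exactly one of $p^a$ and $q^b$: these form the class $P$ (order a multiple of $p^a$ but not $q^b$) and the class $Q$ (order a multiple of $q^b$ but not $p^a$), there are no edges within either class, and every element of $P$ is joined to every element of $Q$. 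Hence $\Gamma(C_n)=K_{|P|,|Q|}$ with $|P|=\varphi(p^a)q^{b-1}$ and $|Q|=p^{a-1}\varphi(q^b)$; since $K_{2,t}$ is planar while $K_{3,3}$ is not, $\Gamma(C_n)$ is planar if and only if $\min(|P|,|Q|)\le 2$, and a direct arithmetic check shows this happens exactly when $n=pq$ with $p\le 3$ (alternative (2)) or $n=4q$ with $q$ an odd prime (alternative (3)).

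Finally, if $m(G)=2$ and $G$ is non-cyclic, then $G$ is $2$-generated and every minimal generating set has size exactly $2$, so $\Gamma(G)=\Gamma_2(G)$ is the generating graph of $G$, and by \cite{planar} the possibilities are precisely $C_2\times C_2$, $C_2\times C_4$, $D_4$, $Q_8$, $\perm(3)$ --- the exceptional list in the statement. For the converse, one verifies directly that every group in the statement has planar $\Gamma(G)$: $\Gamma(C_{p^a})$ is edgeless; $\Gamma(C_n)=K_{|P|,|Q|}$ with $\min(|P|,|Q|)\le 2$ in alternatives (2) and (3); $\Gamma(C_2\times C_2)=K_3$; $\Gamma(C_2\times C_4)$, $\Gamma(D_4)$ and $\Gamma(Q_8)$ all equal the octahedron $K_{2,2,2}$ (the blow-up by $2$ of $\Gamma(C_2\times C_2)=K_3$, each of these groups having Frattini quotient $C_2\times C_2$ and Frattini subgroup of order $2$); and $\Gamma(\perm(3))$ is $K_5$ with one edge deleted. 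This completes the proof.
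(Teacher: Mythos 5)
Your overall route --- organizing everything around $m(G)$, cliques coming from large minimal generating sets, and the Frattini blow-up description of $\Gamma(G)$ --- is genuinely different from the paper's, which instead uses a quotient lemma (an edge of $\Delta(G/N)$ lifts to a $K_{|N|,|N|}$ inside $\Gamma(G)$, so planarity forces $|N|\le 2$ unless $G/N$ is cyclic of prime-power order) and then pins $G$ down via a normal subgroup $N$ with $d(G/N)=2$ and all larger quotients cyclic. Several of your steps are correct and nice: the $K_5$ on $\{x_1,x_2,x_3,x_1x_2,x_1x_2x_3\}$ when $d(G)\ge 3$ checks out, and the cyclic case works, provided you choose the supports carefully (take the two co-singleton supports $\{2,\dots,t\}$ and $\{1,3,\dots,t\}$, whose classes have sizes $\prod_{i\ne 1}(p_i-1)\ge 8$ and $\prod_{i\ne 2}(p_i-1)\ge 4$; singleton supports can have classes of size $1$, so "one finds two incomparable supports with at least three elements each" is not automatic).

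There is, however, a genuine gap in the case $d(G)=2$, $m(G)\ge 3$. You argue that since $G$ is not one of the five exceptional groups, the classification in \cite{planar} forces the generating graph of $G$ to be non-planar, and hence $\Gamma(G)$ is non-planar. This fails for $G=D_6\cong C_2\times\perm(3)$, which has $d(G)=2$ and $m(G)=3$: its generating graph, restricted to non-isolated vertices, is the hexagonal bipyramid (the six reflections form a $6$-cycle, since $sr^i$ and $sr^j$ generate $D_6$ if and only if $\gcd(i-j,6)=1$, while $r^{\pm 1}$ are joined to all six reflections and to nothing else), and the bipyramid is planar. So $D_6$ must appear in the list of \cite{planar} alongside the five groups you quote, and your appeal to that classification gives no contradiction for it. The non-planarity of $\Gamma(D_6)$ is visible only through minimal generating sets of size $3$: for instance $\{s,sr^2,r^3\}$ and $\{s,sr^3,r^2\}$ are minimal generating sets, and in this way all six non-central involutions become pairwise adjacent in $\Gamma(D_6)$, producing a $K_6$. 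This is exactly how the paper disposes of this group (its quotient analysis isolates $D_6$ and $C_3\rtimes C_4$ as the groups with $G/N\cong\perm(3)$ and $|N|=2$, and treats each by hand). To repair your argument you would need to handle separately every non-cyclic $2$-generated group with planar generating graph and $m\ge 3$ occurring in \cite{planar} --- at least $D_6$ --- by exhibiting edges of $\Gamma(G)$ arising from larger minimal generating sets.
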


Other results, and some related open questions, are presented in Section \ref{last}.

 \section{Proof of Theorem \ref{main}}
\begin{lemma}
	Let $g\in G.$ Then $g$ is isolated in $\Gamma(G)$ if and only if either $G=\langle g\rangle$ or $g\in \frat(G).$
\end{lemma}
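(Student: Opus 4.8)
The plan is to prove both directions of the equivalence.

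For the ``if'' direction, suppose first that $G=\langle g\rangle$. If $g$ were adjacent to some vertex $y$, then $\{g,y\}$ would extend to a minimal generating set $X$; but $g$ alone already generates $G$, so $X=\{g\}$ and no such $y$ exists. Next suppose $g\in\frat(G)$. If $g$ lies in a minimal generating set $X=\{g,x_1,\dots,x_k\}$, then since $g$ is a non-generator, $\langle x_1,\dots,x_k\rangle=\langle g,x_1,\dots,x_k\rangle=G$, contradicting minimality unless $X=\{g\}$; and $\{g\}$ being a generating set would force $G=\langle g\rangle\le\frat(G)$, impossible for a nontrivial cyclic group (and if $G$ is trivial the statement is vacuous/degenerate anyway). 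Hence $g$ is isolated in either case. A small bit of care is needed with the trivial group and with whether singletons count as (minimal) generating sets containing $g$ together with a second vertex — but by definition an edge requires $x\neq y$, so the singleton case never produces an edge.

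For the ``only if'' direction, suppose $g$ is isolated; I want to show $G=\langle g\rangle$ or $g\in\frat(G)$. Assume $g\notin\frat(G)$. Then by definition of the Frattini subgroup there is a maximal subgroup $M$ of $G$ with $g\notin M$, so $\langle g, M\rangle=G$. Now pick a generating set of $G$ contained in $\{g\}\cup M$ that is minimal with respect to inclusion among such generating sets: it must contain $g$ (as $M\neq G$), say it is $\{g\}\cup Y$ with $Y\subseteq M$. This set is a minimal generating set of $G$: removing $g$ leaves $\langle Y\rangle\le M\neq G$, and removing any $y\in Y$ contradicts the minimality of our chosen set. If $Y\neq\emptyset$, pick $y\in Y$; then $\{g,y\}$ lies in the minimal generating set $\{g\}\cup Y$, so $g$ is adjacent to $y$, contradicting that $g$ is isolated. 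Therefore $Y=\emptyset$, which means $\{g\}$ generates $G$, i.e.\ $G=\langle g\rangle$.

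I do not expect any serious obstacle here; the only thing requiring attention is bookkeeping around edge cases (the trivial group, singleton ``generating sets'', and the requirement $x\neq y$ in the definition of adjacency), and making sure that the minimal-by-inclusion generating set inside $\{g\}\cup M$ really does restrict to a minimal generating set of $G$ in the sense defined in the introduction. The existence of a maximal subgroup avoiding $g$ when $g\notin\frat(G)$ is just the definition of $\frat(G)$ as the intersection of all maximal subgroups, so that step is immediate for finite $G$.
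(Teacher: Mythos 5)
Your proof is correct and the nontrivial (``only if'') direction is essentially identical to the paper's argument: pick a maximal subgroup $M$ avoiding $g$, extract a minimal generating set from $\{g\}\cup M$, note it must contain $g$, and conclude. You additionally spell out the easy ``if'' direction (via the non-generator property of Frattini elements), which the paper leaves implicit.
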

\begin{proof}
Suppose $g\notin \frat(G).$ There exists a maximal subgroup $M$ of $G$ with $g\notin M.$  The set $X=\{g\}\cup M$ contains a minimal generating $X$ of $G$ and $g\in X$ (otherwise $G=\langle X\rangle \leq M$). If $X\neq \{g\},$ then $g$ is not isolated, otherwise $\langle g \rangle=G.$
\end{proof}

\begin{prop}\label{cyc}
If $G$ is a finite cyclic group, then $\Delta(G)$ is connected.
\end{prop}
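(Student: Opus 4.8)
The plan is to describe the non-isolated vertices of $\Gamma(C_n)$ explicitly and then show that any two of them can be joined by a short path. Write $n = p_1^{a_1}\cdots p_r^{a_r}$. By the lemma just proved, $g$ is isolated in $\Gamma(C_n)$ precisely when $\langle g\rangle = C_n$ or $g \in \frat(C_n)$; here $\frat(C_n) = C_m$ where $m = n/(p_1\cdots p_r)$. So $V(C_n)$ consists of the elements of order $d$ where $d \mid n$, $d \neq n$, and $d$ is \emph{not} a divisor of $m$ — equivalently, $d$ is a proper divisor of $n$ that is divisible by at least one $p_i^{a_i}$. A minimal generating set of $C_n$ has the form $\{x_1,\dots,x_r\}$ where $\langle x_i\rangle$ supplies the full $p_i$-part: more precisely, writing $n_i = p_i^{a_i}$, a set $\{x_1,\dots,x_r\}$ is minimal generating iff $p_i^{a_i} \mid |x_i|$ for each $i$ (after suitable relabelling) and the orders are "independent" in the obvious sense. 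The key observation is that two vertices $x,y$ are adjacent in $\Gamma(C_n)$ iff there is a partition of $\{p_1,\dots,p_r\}$ such that $|x|$ carries the full prime-power part $p_i^{a_i}$ for the primes in one block and $|y|$ carries it for the primes in the complementary (nonempty) block, with all remaining primes absorbable — in particular $x$ and $y$ are adjacent whenever $|x|\cdot|y|$ is divisible by $n$ and $\gcd$ conditions allow splitting the common primes.

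The main step is then a connectivity argument on this combinatorial structure. First I would treat the easy case $r = 1$ (so $n = p^a$): here $\frat(C_n) = C_{p^{a-1}}$, and every element of order exactly $p^a$ is a generator hence isolated, so $V(C_n)$ is the single class of elements of order $p^{a-1}$... wait, those are in the Frattini subgroup too; in fact for $n$ a prime power $r=1$ we should get that $V(C_n)$ may be empty or a single clique — I would handle this boundary case directly. For $r \geq 2$, I would argue as follows: pick for each index $i$ a "standard" generator $g_i$ of the Sylow $p_i$-subgroup $C_{n_i} \leq C_n$, so $|g_i| = p_i^{a_i}$ and $g_i \in V(C_n)$ (since $r\ge 2$, $g_i$ is not a generator and not in $\frat$). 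Any $\{g_i : i \neq j\}$-type partial set extends to a minimal generating set, so $g_i$ and $g_j$ are adjacent for all $i \neq j$; thus $\{g_1,\dots,g_r\}$ lies in a common connected component, indeed forms a clique of size $r \geq 2$. Now for an arbitrary vertex $x \in V(C_n)$, with $|x| = d$, choose an index $i$ such that $p_i^{a_i} \mid d$ (one exists by the description of $V(C_n)$) and choose $j \neq i$; I claim $x$ is adjacent to $g_j$. Indeed, $\langle x\rangle$ contains the full $p_i$-part and more, $\langle g_j \rangle$ is the full $p_j$-part, and one completes $\{x, g_j\}$ to a minimal generating set by adding, for each remaining prime $p_k$ ($k\ne i,j$) not already covered by $\langle x\rangle$, a generator of its Sylow subgroup — the resulting set is minimal provided we are careful that $x$ is genuinely needed, which holds because $x$ supplies $p_i$ and no other chosen element does. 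Hence every vertex of $V(C_n)$ is adjacent to one of the $g_j$, and since the $g_j$ are pairwise adjacent, $\Delta(C_n)$ is connected.

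The step I expect to be the main obstacle is verifying that the completion $\{x, g_j\} \cup \{\text{extra Sylow generators}\}$ is actually \emph{minimal}, not merely generating: one has to rule out that $x$ (or $g_j$) is redundant. Redundancy of $g_j$ is impossible since it is the unique member carrying the prime $p_j$; redundancy of $x$ requires a little care when $\langle x \rangle$ already contains some $p_k$-part for $k \ne i$, because then one must simply not add a generator for $p_k$, and the check that nothing else can replace $x$'s role at the prime $p_i$ is what needs the precise structure of subgroups of a cyclic group (every subgroup is characteristic, orders multiply over coprime parts). This is elementary but is the technical heart; everything else is bookkeeping with divisors of $n$. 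An alternative, perhaps cleaner, route would be to invoke Theorem \ref{conu} once it is available, since a finite cyclic group is soluble and one can take $u = r$; but presumably the proposition is placed here precisely to be used \emph{before} (or independently of) the soluble case, so I would give the self-contained divisor argument above.
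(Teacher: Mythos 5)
Your overall strategy is the same as the paper's: build a clique of ``hub'' vertices indexed by the prime divisors of $n$ and then show every non-isolated vertex is adjacent to one of them. But your concrete choice of hubs --- $g_i$ a generator of the Sylow $p_i$-subgroup, of order $p_i^{a_i}$ --- creates a genuine problem in the second step. You choose $i$ with $p_i^{a_i}\mid |x|$ and then ``choose $j\neq i$'' arbitrarily, asserting that $g_j$ cannot be redundant in the completed set because it is ``the unique member carrying the prime $p_j$.'' That is false when $p_j^{a_j}\mid |x|$: in a cyclic group the subgroup of order $p_j^{a_j}$ is unique, so $\langle g_j\rangle\leq\langle x\rangle$ and $g_j$ is redundant in any set containing $x$; your proposed set is then not minimal and yields no edge. (Concretely, $n=p_1p_2p_3$, $|x|=p_1p_2$, $i=1$, $j=2$.) The fix is easy --- choose $j$ with $p_j^{a_j}\nmid |x|$, which exists because $x$ is not a generator --- but as written the argument breaks. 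A smaller, non-load-bearing slip: your preamble asserts that minimal generating sets of $C_n$ all have the form $\{x_1,\dots,x_r\}$ with $r$ the number of prime divisors; this is false (in $C_{30}$ the pair of elements of orders $6$ and $5$ is a minimal generating set of size $2<3$), though nothing in your main argument depends on it.

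The paper sidesteps all of this by making the dual choice: $g_i$ of order $n/p_i^{a_i}$. Then for any $i\neq j$ the pair $\{g_i,g_j\}$ is already a minimal generating set of size $2$, and for any $x\in V(G)$ with $p_i^{a_i}\mid |x|$ the pair $\{x,g_i\}$ is likewise a minimal generating set of size $2$ (it generates because the lcm of the orders is $n$, and it is minimal because neither element alone generates, $x$ being a non-generator by hypothesis). No completion to a larger set and no minimality bookkeeping are needed, which is exactly the step you identified as the technical heart of your version.
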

\begin{proof}
	Let $|G|=p_1^{a_1}\cdots p_t^{a_t},$ where $p_1,\dots,p_t$ are distinct primes. If $t=1,$ then $V(G)=\emptyset.$ So assume $t>1$ and, for $1\leq i\leq t,$ let $g_i$ be an element of $G$ of order $|G|/p_i^{a_i}.$ The subset $X=\{g_1,\dots,g_t\}$ induces a complete subgraph of $\Delta(G).$ Now let $x\in V(G).$ Since $x\notin \frat(G),$ there exists $i\in \{1,\dots,t\}$ such that $p_i^{a_i}$ divides $|x|$ and $x$ is adjacent to $g_i.$
\end{proof}

\begin{lemma}\label{easy}Let $N$ be a normal subgroup of a finite group $G.$
	If $Y=\{y_1,\dots,y_t\}$ has the property that $\langle Y, N\rangle=G$, but $\langle Z, N\rangle \neq G$ for every proper subset $Z$ of $Y,$  then there exist $n_1,\dots,n_u\in N$ such that
	$\{y_1,\dots,y_t,n_1,\dots,n_u\}$ is a minimal generating set of $G.$
\end{lemma}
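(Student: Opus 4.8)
The plan is to produce the required minimal generating set by adjoining to $Y$ a subset of $N$ of \emph{smallest possible size}, and then to verify minimality by distinguishing which element of the resulting set is deleted.

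First I would observe that, since $N$ is finite and $\langle Y, N\rangle = G$, the family of subsets $S$ of $N$ with $\langle Y\cup S\rangle = G$ is non-empty (it contains $N$ itself). Choose among them one, say $S=\{n_1,\dots,n_u\}$, of minimal cardinality. Minimality of $|S|$ forces the $n_i$ to be pairwise distinct, and, more to the point, forces none of them to lie in $\langle Y\rangle$: if $n_i\in\langle Y\rangle$ then $n_i$ is redundant and $\langle Y\cup(S\setminus\{n_i\})\rangle = G$, contradicting the choice of $S$. In particular no $n_i$ equals any $y_j$, so $X:=\{y_1,\dots,y_t,n_1,\dots,n_u\}$ consists of $t+u$ distinct elements, and $\langle X\rangle = \langle Y\cup S\rangle = G$. (If $\langle Y\rangle = G$ already we take $u=0$ and $X=Y$, which is harmless.)

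Next I would check that no proper subset of $X$ generates $G$, according to which element is removed. If we delete some $n_i$, then $X\setminus\{n_i\} = Y\cup(S\setminus\{n_i\})$ and $\langle Y\cup(S\setminus\{n_i\})\rangle\neq G$ by the minimal choice of $|S|$. If instead we delete some $y_j$, put $Z=Y\setminus\{y_j\}$, a proper subset of $Y$; since $S\subseteq N$ we obtain
\[
\langle X\setminus\{y_j\}\rangle = \langle Z\cup S\rangle \le \langle Z, N\rangle \neq G
\]
by the hypothesis imposed on $Y$. Hence $X$ is a minimal generating set of $G$ containing $Y$, which is exactly the assertion.

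The only point requiring care is the removal-of-$y_j$ case: there one must use that the newly adjoined elements all lie in $N$, so that enlarging by $N$ absorbs them and reduces the statement to the given minimality property of $Y$; dually, the removal-of-$n_i$ case is precisely the reason for minimising $|S|$ rather than building $S$ up greedily. Neither of these is a serious obstacle, so the proof is short.
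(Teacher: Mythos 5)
Your proof is correct and takes essentially the same approach as the paper: the paper simply extracts an irredundant generating subset $X$ from $Y\cup N$ and uses the hypothesis on $Y$ to conclude $Y\subseteq X$, while you fix $Y$ and minimise the cardinality of the adjoined subset $S\subseteq N$. The two verifications (deleting an $n_i$ versus deleting a $y_j$) are identical in substance, so this is only a cosmetic variation.
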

\begin{proof}
	Since $G=\langle Y, N\rangle,$ $Y\cup N$ contains a minimal generating set $X$ of $G,$ and the minimality property of $Y$ implies $Y\subseteq X.$
\end{proof}

\begin{lemma}\label{facile}Let $N$ be a normal subgroup of a finite group $G.$ If $x_1N$ and $x_2N$ are joined by an edge of $\Delta(G/N),$ then $x_1n_1$ and $x_2n_2$ are joined by an edge of $\Delta(G)$ for every $n_1, n_2\in N.$
	
\end{lemma}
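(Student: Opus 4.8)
The statement is essentially a direct consequence of Lemma \ref{easy}, so the plan is to reduce to it. First I would unpack the hypothesis: saying that $x_1N$ and $x_2N$ are joined by an edge of $\Delta(G/N)$ means that $x_1N\neq x_2N$ and there is a minimal generating set of $G/N$ of the form $\{x_1N,x_2N,y_3N,\dots,y_tN\}$, where, being a set, its $t$ elements are pairwise distinct cosets. Fix $n_1,n_2\in N$. Since $x_1n_1N=x_1N\neq x_2N=x_2n_2N$, the elements $x_1n_1$ and $x_2n_2$ of $G$ are distinct, and likewise no two of $x_1n_1,x_2n_2,y_3,\dots,y_t$ can coincide, as their images in $G/N$ are distinct; so $Y:=\{x_1n_1,x_2n_2,y_3,\dots,y_t\}$ has exactly $t$ elements.

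Next I would verify that $Y$ satisfies the hypotheses of Lemma \ref{easy} relative to $N$. On one hand $\langle Y,N\rangle/N=\langle x_1N,x_2N,y_3N,\dots,y_tN\rangle=G/N$, so $\langle Y,N\rangle=G$. On the other hand, if $Z$ is a proper subset of $Y$, then (since $Y\to G/N$ is injective on $Y$) the image of $Z$ in $G/N$ is a proper subset of the minimal generating set $\{x_1N,x_2N,y_3N,\dots,y_tN\}$, hence does not generate $G/N$; therefore $\langle Z,N\rangle\neq G$.

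Applying Lemma \ref{easy}, there exist $n_1',\dots,n_u'\in N$ such that $Y\cup\{n_1',\dots,n_u'\}$ is a minimal generating set of $G$. This set contains the two distinct elements $x_1n_1$ and $x_2n_2$, so $x_1n_1$ and $x_2n_2$ are joined by an edge of $\Gamma(G)$; in particular both are non-isolated, hence lie in $V(G)$, and the edge is an edge of $\Delta(G)$, as required.

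There is no real obstacle here: the only points that need care are that lifting the cosets to $G$ does not accidentally identify two of the generators (handled by passing to images) and that minimality in $G/N$ transfers to the ``minimality modulo $N$'' condition needed to invoke Lemma \ref{easy}. Everything else is bookkeeping.
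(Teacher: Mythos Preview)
Your proof is correct and follows the same route as the paper: lift a minimal generating set of $G/N$ containing $x_1N,x_2N$ to a set $Y$ in $G$ containing $x_1n_1,x_2n_2$, and then apply Lemma \ref{easy} to extend $Y$ by elements of $N$ to a minimal generating set of $G$. You are simply more explicit than the paper about why the lifted elements remain distinct and why minimality in $G/N$ yields the ``minimality modulo $N$'' hypothesis of Lemma \ref{easy}.
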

\begin{proof}
Let	$\{x_1N,x_2N,x_3N,\dots,x_tN\}$ be a minimal generating set of $G/N.$ By Lemma \ref{easy}, for every $n_1, n_2 \in N,$ there exists $m_1,\dots,m_u\in N$ such that $$\{ x_1n_1,x_2n_2,x_3,\dots,x_t,m_1,\dots,m_u\}$$ is a minimal generating set of $G.$
\end{proof}

We will  write $x_1\sim_G x_2$ if $x_1$ and $x_2$ belong to the same connected component of $\Delta(G).$ The following lemma is an immediate consequence of Lemma \ref{facile}.

\begin{lemma}\label{basic}
	Let $N$ be a normal subgroup of a finite group $G$ and let $x, y \in G.$ If $xN, yN \in V(G/N)$ and $xN\sim_{G/N}yN,$ then $x\sim_G y.$
\end{lemma}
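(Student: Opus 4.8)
The plan is to reduce the claim directly to Lemma \ref{facile}. Write $\overline{G}=G/N$ and suppose $xN$ and $yN$ lie in the same connected component of $\Delta(\overline{G})$. By definition of $V(\overline G)$, both $xN$ and $yN$ are non-isolated, hence there is a path
\[
xN = z_0N \,-\, z_1N \,-\, \cdots \,-\, z_kN = yN
\]
in $\Delta(\overline G)$, where each consecutive pair $z_iN, z_{i+1}N$ is joined by an edge. I would first handle the degenerate possibility $k=0$: then $xN=yN$, and since $xN\in V(\overline G)$ there is some $wN$ adjacent to $xN=yN$ in $\Delta(\overline G)$; applying Lemma \ref{facile} twice (to the edge $xN - wN$ with the two coset representatives $x$ and $w$, and then to the edge $wN - yN$ with representatives $w$ and $y$) produces a path $x - w' - y$ in $\Delta(G)$ for suitable $w'\in wN$, so $x\sim_G y$. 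Actually the cleaner route is to note $k\ge 1$ may be assumed after this remark, or simply to observe that Lemma \ref{facile} applied to the edge $xN - wN$ already shows $x$ is adjacent to some element of $wN$ and likewise $y$ is adjacent to some element of $wN$; picking the \emph{same} representative of $wN$ in both applications (Lemma \ref{facile} allows \emph{any} $n_1,n_2\in N$) gives the path.

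For the generic case $k\ge 1$, I would argue inductively along the path. Since $z_iN$ and $z_{i+1}N$ are joined by an edge of $\Delta(\overline G)$, Lemma \ref{facile} says that for \emph{every} choice $n_i, n_{i+1}\in N$ the elements $z_i n_i$ and $z_{i+1} n_{i+1}$ are joined by an edge of $\Delta(G)$. Fix once and for all a representative $\widetilde{z_i}\in z_iN$ for each internal vertex $1\le i\le k-1$, take $\widetilde{z_0}=x$ and $\widetilde{z_k}=y$. Applying Lemma \ref{facile} to each edge $z_iN - z_{i+1}N$ with the chosen representatives $\widetilde{z_i}, \widetilde{z_{i+1}}$ yields that $\widetilde{z_i}$ and $\widetilde{z_{i+1}}$ are adjacent in $\Delta(G)$ for every $0\le i\le k-1$. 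Concatenating these edges gives a walk
\[
x = \widetilde{z_0} \,-\, \widetilde{z_1} \,-\, \cdots \,-\, \widetilde{z_k} = y
\]
in $\Delta(G)$, whence $x\sim_G y$.

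There is essentially no obstacle here: the entire content is already packaged in Lemma \ref{facile}, and the only thing to be careful about is the bookkeeping of coset representatives — one must use the \emph{freedom} in Lemma \ref{facile} to pick a single representative of each intermediate coset and reuse it for the two incident edges, so that the resulting edges of $\Delta(G)$ actually chain together into a connected walk rather than merely a disjoint collection of edges. One should also note in passing that the internal vertices $\widetilde{z_i}$ automatically lie in $V(G)$, since each is an endpoint of an edge of $\Delta(G)$, so the walk is genuinely a walk in $\Delta(G)$ and not just in $\Gamma(G)$; that is why the statement is phrased in terms of $\sim_G$. This makes Lemma \ref{basic} the natural inductive tool for climbing a chief (or composition) series in the proofs of Theorems \ref{main} and \ref{conu}.
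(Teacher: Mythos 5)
Your proof is correct and is exactly the argument the paper has in mind: the paper states this lemma as an ``immediate consequence of Lemma \ref{facile}'', and your write-up simply makes explicit the lifting of a path edge-by-edge, using the freedom in Lemma \ref{facile} to fix one representative per intermediate coset so the lifted edges concatenate. No gaps.
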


\begin{lemma}{\cite[Corollary 1.5]{bgk}}\label{spre} Let $G$ be a finite group with $S:= F^*(G)$ nonabelian simple. If $x,y$ are nontrivial
elements of $G,$ then there exists $s\in G$ such that $\langle x, s\rangle$ and $\langle y, s\rangle$ both contain S. 
\end{lemma}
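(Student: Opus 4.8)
The plan is to derive this from the probabilistic method developed for the \emph{uniform spread} of simple and almost simple groups by Guralnick--Kantor and Breuer--Guralnick--Kantor. First I would record the structural set-up: since $S=F^*(G)$ is nonabelian simple we have $C_G(S)=1$, so $S\le G\le\aut(S)$, and for each $x\in G$ the subgroup $L_x:=\langle x,S\rangle$ is almost simple with socle $S$ and $L_x/S$ cyclic, whence $L_x'=S$ (as $S$ is perfect). The goal is to produce a single conjugacy class $C$ of elements of $S$, depending only on the isomorphism type of $S$, with the property that for \emph{every} nontrivial $x\in G$ a uniformly random element $s\in C$ satisfies $\langle x,s\rangle\ge S$ with probability strictly greater than $\tfrac12$. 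Given such a $C$, a union bound applied to the two prescribed elements immediately yields an $s\in C$ with $\langle x,s\rangle\ge S$ and $\langle y,s\rangle\ge S$ at once, since $\tfrac12+\tfrac12=1$.

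The reduction that makes the estimate tractable is the following: fix a nontrivial $x\in G$ and take $s\in S$. If $\langle x,s\rangle\not\le S$ fails, i.e. $\langle x,s\rangle\not\ge S$, then $\langle x,s\rangle S=L_x$ (because $s\in S$) while $\langle x,s\rangle\ne L_x$ (otherwise $S=L_x'=\langle x,s\rangle'\le\langle x,s\rangle$), so $\langle x,s\rangle$ is contained in some maximal subgroup $M$ of $L_x$ with $x\in M$ and $S\not\le M$. Hence the proportion of $s\in C$ for which $\langle x,s\rangle\not\ge S$ is at most
\[
\sum_{\substack{M<L_x\ \text{maximal}\\ x\in M,\ S\not\le M}}\frac{|C\cap M|}{|C|},
\]
and each summand is a fixed point ratio of $C$ in the faithful primitive action of $L_x$ on the coset space $L_x/M$. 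So everything comes down to choosing $C$ — typically a class of elements whose order is a well-chosen primitive prime divisor of $|S|$, or a class of regular semisimple elements — so that these fixed point ratios are small enough, and the number of conjugacy classes of the relevant maximal subgroups is controlled, to force the displayed sum below $\tfrac12$ for all $x$.

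I expect that last point to be the entire difficulty, and it is not something one can carry out uniformly: it requires the classification of finite simple groups. For $S$ of Lie type one would invoke the fixed point ratio bounds of Liebeck--Shalev, Guralnick--Magaard and Burness, together with Aschbacher's structure theorem for the maximal subgroups in the classical case; for $S=\alt(n)$ one would use permutation character estimates; and the sporadic groups, along with the small-rank or small-field Lie type cases where the generic bounds are too weak, would have to be handled by direct computation. Each of these ingredients is available in the literature, which is precisely what allows the lemma to be quoted here in its clean form.
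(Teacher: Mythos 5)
This lemma is not proved in the paper at all: it is quoted verbatim as \cite[Corollary 1.5]{bgk} (Breuer--Guralnick--Kantor, \emph{Probabilistic generation of finite simple groups II}) and used as a black box in the proof of Lemma \ref{nonabmon}. So there is no in-paper argument to compare against; the relevant comparison is with the cited source, and your outline is a faithful description of exactly the strategy used there. The reductions you do carry out are all correct: $C_G(S)=1$ gives $S\le G\le\aut(S)$; $L_x:=\langle x,S\rangle$ has $L_x'=S$, so a subgroup $\langle x,s\rangle$ with $s\in S$ that fails to contain $S$ is proper in $L_x$ and supplements $S$, hence lies in a maximal $M<L_x$ with $x\in M$ and $S\not\le M$; the failure probability is then bounded by a sum of fixed point ratios $\abs{C\cap M}/\abs{C}$ over such $M$; and uniform spread $\ge 2$ (failure probability $<\tfrac12$ for every nontrivial $x$, with a class $C$ depending only on $S$ and not on the coset $xS$) yields the two-element conclusion by the union bound. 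What your proposal does not do --- and candidly says it does not do --- is the actual substance of the theorem: exhibiting the class $C$ and verifying that the fixed point ratio sums stay below $\tfrac12$ for every almost simple group with socle $S$, which is a long CFSG-dependent case analysis occupying most of \cite{bgk} (together with its predecessor by Guralnick--Kantor and the fixed point ratio literature you name). As a blind proof attempt it is therefore an accurate roadmap rather than a proof, but it is the right roadmap, and since the paper itself treats the statement as an imported result, deferring that case analysis to the literature is exactly what the paper does too.
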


\begin{lemma}
	\label{nonabmon}
	Let $G$ be a finite monolithic primitive group. Assume that $N=\soc G$ is non abelian and that 
	$G= \langle x_1, N\rangle=\langle x_2, N\rangle.$ Then there exists $m\in N$ such that
	$\langle x_1, m \rangle=\langle x_2, m \rangle=G.$
\end{lemma}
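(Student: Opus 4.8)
The plan is to reduce the statement to the simple‑group situation covered by Lemma~\ref{spre}. Since $G$ is monolithic primitive with non‑abelian socle $N$, we may identify $N$ with a direct power $S^k$ of a non‑abelian simple group $S$, and $G$ acts on the set of the $k$ factors; moreover $G$ has a core‑free maximal subgroup $H$ with $G=HN$ and $H\cap N$ a subdirect product of the factors (the standard structure of primitive groups of socle type). The hypothesis $G=\langle x_i,N\rangle$ says precisely that $x_i$ together with $N$ generates $G$, i.e.\ the image of $x_i$ in $G/N$ generates $G/N$. What we must do is adjust each $x_i$ by choosing a single $m\in N$ that completes \emph{both} $x_1$ and $x_2$ to generating pairs of $G$.

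First I would treat the simplest case $k=1$, where $N=S$ is simple. Here $x_1,x_2$ are non‑trivial elements of $G$ (they cannot be trivial since $\langle x_i,N\rangle=G\neq N$), so Lemma~\ref{spre} hands us an $s\in G$ with both $\langle x_1,s\rangle$ and $\langle x_2,s\rangle$ containing $S=N$. But $s$ need not lie in $N$; to fix this, note $s=m h$ for suitable $m\in N$ and, using $G=\langle x_i,N\rangle$, one shows $\langle x_i,m\rangle\supseteq\langle x_i,s\rangle$ modulo the part of $s$ already generated — more precisely, write $s$ in terms of $N$ and a power/word in $x_i$; since $\langle x_i,N\rangle=G$ one can replace $s$ by its $N$‑component without losing the property that $x_i$ together with it generates a subgroup containing $N$, and then generating $N$ plus having the right image in $G/N$ forces the subgroup to be all of $G$. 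This last point is where one uses that $\langle x_i,N\rangle=G$: once a subgroup contains $N$ and an element whose coset generates $G/N$, it is $G$.

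For general $k$, the plan is to play the same game coordinate by coordinate, but one must be careful that a single $m=(m_1,\dots,m_k)\in S^k$ works simultaneously for $x_1$ and $x_2$ and is compatible with the permutation action of $G$ on the factors. The standard device is: the action of $G=\langle x_i,N\rangle$ on the $k$ factors is transitive (monolithicity), so it suffices to control one factor and then spread the choice around the orbit; on that one factor the projections of $x_1$ and $x_2$ induce non‑trivial elements of (an almost simple group with socle) $S$, Lemma~\ref{spre} supplies a common completing element there, and one lifts it to an $m\in N$ by setting the remaining coordinates to be its translates under a transversal. The verification that the resulting $\langle x_i,m\rangle$ contains all of $N$ uses that it contains a full diagonal‑type subgroup together with enough of the permutation action, and then the argument of the previous paragraph upgrades "contains $N$" to "equals $G$".

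The main obstacle I expect is precisely this coordinate bookkeeping in the case $k>1$: ensuring that one fixed element $m\in N$ simultaneously completes \emph{both} $x_1$ and $x_2$, while respecting how $G$ permutes the simple factors, so that $\langle x_i,m\rangle$ projects onto each factor and in fact contains the whole socle rather than merely a proper subdirect subgroup. The passage from "the subgroup contains $N$ and has the correct image in $G/N$" to "the subgroup equals $G$" is the easy part, being immediate from $G=\langle x_i,N\rangle$; and the $k=1$ case is essentially a direct citation of Lemma~\ref{spre} modulo the cosmetic replacement of $s$ by its $N$‑part. So the plan is: (1) handle $k=1$ via Lemma~\ref{spre}; (2) reduce the general case to a single factor using transitivity of the factor action; (3) apply Lemma~\ref{spre} on that factor and lift; (4) conclude using $G=\langle x_i,N\rangle$.
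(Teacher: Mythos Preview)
Your outline has the right shape but leaves genuine gaps at both stages.

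For $k=1$: Lemma~\ref{spre} only hands you $s\in G$, and your move ``replace $s$ by its $N$-component relative to $x_i$'' produces an element that depends on $i$ (since $s=x_1^{a}m_1=x_2^{b}m_2$ with $m_1\neq m_2$ in general). You need a single $m\in N$. In fact the results of \cite{bgk} let one take $s$ in a fixed class of $S$, so $s\in N$ outright; but your argument as written does not secure this.

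For $k>1$ the gap is more serious. ``Project $x_i$ onto a factor'' is not well-defined, since $x_i\notin N$ permutes the factors; and your suggested lift of $m$ as a tuple of \emph{translates under a transversal} gives an element whose coordinates are all $\aut(S)$-conjugate, which is precisely the situation in which $\langle x_i,m\rangle\cap N$ can be a proper diagonal subdirect product rather than all of $N$. The paper overcomes this with two ideas you are missing. First, $G/N$ is cyclic (each $x_iN$ generates it), so some power $x_2^{u}$ has the \emph{same} permutation part $\sigma$ as $x_1$; write $x_1=(h_1,\dots,h_t)\sigma$ and $x_2^{u}=(h_1^{*},\dots,h_t^{*})\sigma$ with $\sigma(1)=2$. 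Second, take $m=(w,s,1,\dots,1)\in N$ where $w\in S$ is an involution and $s\in S$ is chosen via Lemma~\ref{spre} applied \emph{inside} $S$ so that $\langle w^{h_1},s\rangle=\langle w^{h_1^{*}},s\rangle=S$. Conjugating $m$ by $x_1$ (resp.\ $x_2^{u}$) pushes $w^{h_1}$ (resp.\ $w^{h_1^{*}}$) into slot $2$, so $\pi_2(\langle x_i,m\rangle\cap N)=S$; transitivity then makes every projection onto. The diagonal obstruction is killed because $w$ and $s$ have different orders ($s$ cannot be an involution, else $S$ would be generated by two involutions), so no coordinate of $m$ is $\aut(S)$-conjugate to another. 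Hence $N\le\langle x_i,m\rangle$, and then your last step gives $\langle x_i,m\rangle=G$. The ``coordinates of different orders'' trick is exactly the missing idea that defeats the subdirect-product obstacle you yourself flagged.
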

\begin{proof}
	We have  $N=S_1\times\dots\times S_t$, where $t\in \mathbb N$ and $S_i\cong S$ with $S$ a nonabelian simple group.
	First consider the case $t=1.$ By Lemma \ref{spre} there exists $m\in N$ with $\langle x_1, m \rangle=\langle x_2, m \rangle=G.$ Assume $t>1.$
	We have $G\leq \aut(S)\wr \perm(t)$ and it is not restrictive to assume $x_1=(h_1,\dots,h_t)\sigma$ with   $h_1,\dots,h_t\in \aut(S),$ $\sigma\in \perm(t)$ and $\sigma(1)=2.$ There exists $u\in \mathbb Z$ such that $x_2^u=(h_1^*,\dots,h_2^*)\sigma,$ with   $h_1^*,\dots,h_t^*\in \aut(S).$ Set $l_1:=x_1,$ $l_2:=x_2^u,$ $k_1:=h_1,$ $k_2:=h_1^*.$ Let $w$ be an element of $S$ of order 2. By Lemma \ref{spre}, there exists $s\in S$ such that $\langle w^{k_1},s\rangle=\langle w^{k_2},s\rangle=S.$ For $1\leq i\leq t,$ consider the projection $\pi_i: N\to S_i\cong S.$ Let
	$m=(w,s,1,\dots,1)\in N\cong S^t.$ 
 For $i\in \{1,2\},$ the subgroup
	$R_i:=\langle m, x_i\rangle$ contains $\langle m, m^{l_i}\rangle \leq N.$ Notice that $S=\langle s, w^{k_i}\rangle\leq \pi_2(\langle m, m^{l_i}\rangle),$ hence $\pi_2(R_i\cap N)\cong S.$
Since $R_iN=G$, we deduce that $\pi_j(R_i\cap N)\cong S$ for each $j\in \{1,\dots,t\}.$ In particular (see for example \cite[Proposition 1.1.39]{classes}) either $N\leq R_i$ or there exist $k\in\{1,\dots,t\}$ and $h \in \aut(S)$ such that $\pi_k(z)=h(\pi_1(z))$ for each $z\in R_i\cap N.$ The second possibility cannot occur, since $m=(w,s,1,\dots,1)\in R_i\cap N$ and $s$ and $w$ are not conjugate in $\aut S$ ($|w|=2$, while $|s|\neq 2$, otherwise $S$ would be generated by two involutions). So $N\leq R_i$ and consequently  $R_i=G.$ 
\end{proof}

\begin{proof}[Proof of Theorem \ref{main}]
	We prove the theorem by induction on the order of $G.$ If can be easily seen that $x\in V(G)$ if and only if $x\frat(G)\in V(G/\frat(G))$ and that $\Delta(G)$ is connected if and only if $\Delta(G/\frat(G)$ is connected. So if $\frat(G)\neq 1,$ the conclusion follows by induction. We may so assume $\frat(G)\neq 1.$ 
Let $N$ be a minimal normal subgroup of $G$ and let $x, y \in V(G)$. If $xN$ and $yN$ are non-isolated vertices of $G/N$, then by induction $xN \sim_G yN$, so it follows from Lemma \ref{basic} that $x\sim_G y.$ This means that the set $\Omega_N$ of the elements
 $g\in V(G)$ such that $gN\in V(G/N)$ is contained in a unique connected component, say $\Gamma_N$, of $\Delta(G).$ Assume now $g\in V(G) \setminus \Omega_N.$ If  $G/N$ is non-cyclic, then $gN\in \frat(G/N).$ In particular a minimal generating set of $G$ containing $g$ must contain also an element $z$ such that $zN\not\in \frat(G/N).$ 
 But then $z\in \Omega_N$ and,
 since $z\in \Gamma_N$ and $g\sim_G z,$ we conclude $g\in \Gamma_N.$
  In other words, if $G/N$ is cyclic, then $\Gamma_N=V(G).$ So we may assume that $G/N$ is cyclic for every minimal normal subgroup $N$ of $G.$
 
  This implies that one of the following occur:
	\begin{enumerate}
		\item $G$ is cyclic;
		\item $G\cong C_p\times C_p$
		\item $G$ has a unique minimal normal subgroup, say $N,$ and $N$ is not central.
	\end{enumerate}
If $G$ is cyclic, then the conclusion follows from Proposition \ref{cyc}. If $G\cong C_p\times C_p$, then $\Delta(G)$ is a complete multipartite graph, with $p+1$ parts of size $p-1.$
So we may assume that the third case occurs. First assume that $N$ is abelian. In this case $N$ has a cyclic complement, $H=\langle h \rangle$, acting faithfully and irreducibly on $N.$ We have $\langle n, h \rangle=G$ for every non trivial element $n$ of $G$, and this implies that there exists a unique connected component $\Lambda$ of $\Delta(G)$ containing all the non trivial elements of $N.$ Let now $g\in G\setminus N.$ There is a conjugate $h^*$ of $h$ in $G$ with $g\notin \langle h^* \rangle.$ If $1\neq n\in N,$ then $G=\langle n, h^*\rangle=
\langle g, h^*\rangle,$ so $g\sim_G h^*\sim_G n$, hence $g\in \Lambda.$
We remain with the case when  $N$ is non-abelian. Let $F/N=\frat(G/N)$ and set $\Sigma_1=F\setminus \{1\}$, $\Sigma_2=\{g\in G\mid \langle g \rangle N=G\},$ $\Sigma_3=\{g\in G\mid gN\in V(G/N)\}$ (we have $\Sigma_3 = \emptyset$ if and only if $|G/N|$ is a prime power). Notice that $V(G)$ is the disjoint union of $\Sigma_1,$ $\Sigma_2$ and $\Sigma_3.$ By Lemma \ref{nonabmon}, all the elements of $\Sigma_2$ belong to the same connected component, say $\Gamma,$ of $\Delta(G).$ Assume $\Sigma_3\neq\emptyset$. Fix $y\in \Sigma_2$ and choose $n$ such that $G=\langle y, n\rangle.$ Let $p$ be a prime divisor of $|G/N|$ and let $y_1, y_2$ be generators, respectively, of a Sylow $p$-subgroup and a $p$-complement of $\langle y \rangle.$ Since $\{y_1,y_2,n\}$ is a minimal generating set for $G$, it follows $y_1, y_2\in \Sigma_3$
and that $y_1\sim_G y_2\sim_G  y \sim_G n.$ But we noticed in the first part of this proof that all the elements of $\Sigma_3=\Omega_N$ belong to the same connected component, and so $\Sigma_2\cup \Sigma_3 \subseteq \Gamma.$ Finally let $g\in \Sigma_1$ and let $X$ be a minimal generating set of $G$ containing $g$. Certainly $X\cap (\Sigma_2\cup \Sigma_3)\neq \emptyset,$ so $g\in \Gamma.$
\end{proof}

\section{Soluble groups}
Let $u$ be a positive integer and $G$ a finite group. In this section
we will use the following notations. We will denote by $\Omega_u(G)$ the set of the minimal generating sets of $G$ of size $u,$ by $\Gamma_n(G)$ the graph whose vertices are the elements of $G$ and in which $x_1$ and $x_2$ are adjacent if and only if there exists $X\in \Omega_n(G)$ with $x_1, x_2 \in X.$
Moreover we will denote by $V_n(G)$ the set of the non-isolated vertices of $\Gamma_u(G)$ and by $\Delta_u(G)$ the subgraph of $\Gamma_u(G)$ induces by $V_u(G).$ Finally we will write $x_1\sim_{G,u}x_2$ to indicate that $x_1$ and $x_2$ belong to the same connected component of $\Delta_u(G).$

We will need a series of preliminary results before giving the proof of Theorem \ref{conu}. The following is immediate.

\begin{lemma}\label{frat}Let $G$ be a finite group. Then $\Delta_u(G)$ is connected if and only if $\Delta_u(G/\frat(G)$ is connected.
\end{lemma}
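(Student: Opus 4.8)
The plan is to reduce the statement to a single observation about how minimal generating sets behave modulo $F:=\frat(G)$: \emph{for $X=\{x_1,\dots,x_u\}\subseteq G$ with $|X|=u$, one has $X\in\Omega_u(G)$ if and only if the cosets $x_1F,\dots,x_uF$ are pairwise distinct and $\{x_1F,\dots,x_uF\}\in\Omega_u(G/F)$.} To prove this I would use only that $F$ is characteristic (hence normal) and that its elements are non-generators, i.e. $\langle Y\rangle F=G$ implies $\langle Y\rangle=G$ for any $Y\subseteq G$ (otherwise $\langle Y\rangle$ would lie in a maximal subgroup, which contains $F$ as well). For the forward implication: $\langle X\rangle=G$ gives $\langle \overline X\rangle=G/F$; a coincidence $x_iF=x_jF$ with $i\ne j$ would give $\langle X\setminus\{x_j\}\rangle F=G$, hence $\langle X\setminus\{x_j\}\rangle=G$, against minimality, so $|\overline X|=u$; and a proper $\overline Z\subsetneq\overline X$ generating $G/F$ would lift to a proper $Z\subsetneq X$ with $\langle Z\rangle F=G$, hence $\langle Z\rangle=G$, again a contradiction. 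The converse is symmetric: $\langle\overline X\rangle=G/F$ forces $\langle X\rangle F=G$ and so $\langle X\rangle=G$, while minimality of $\overline X$ together with $|\overline X|=|X|=u$ forces minimality of $X$.

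Granting this, the remainder is bookkeeping parallel to Lemmas~\ref{facile} and \ref{basic}. First I would record that $x\in V_u(G)$ iff $xF\in V_u(G/F)$: a set in $\Omega_u(G)$ of size $u\ge2$ containing $x$ projects to one containing $xF$, and any $\{xF,y_2F,\dots,y_uF\}\in\Omega_u(G/F)$ lifts by the observation to $\{x,y_2,\dots,y_u\}\in\Omega_u(G)$. Next, the edge-lifting statement: if $x_1F$ and $x_2F$ are joined by an edge of $\Delta_u(G/F)$ then $x_1n_1$ and $x_2n_2$ are joined by an edge of $\Delta_u(G)$ for all $n_1,n_2\in F$ — extend $\{x_1F,x_2F\}$ to some $\overline X=\{x_1F,\dots,x_uF\}\in\Omega_u(G/F)$ and apply the observation to $\{x_1n_1,x_2n_2,x_3,\dots,x_u\}$, which has size $u$ and image $\overline X$. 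In the other direction, any edge $\{w,w'\}$ of $\Delta_u(G)$ projects either to an edge of $\Delta_u(G/F)$ (when $wF\ne w'F$) or to a single vertex (when $wF=w'F$).

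Finally I would assemble the equivalence. If $V_u(G)=\emptyset$ then $V_u(G/F)=\emptyset$ and both graphs are (vacuously) connected. Otherwise: assuming $\Delta_u(G/F)$ connected, given $x,y\in V_u(G)$ we have $xF\sim_{G/F,u}yF$, and lifting a path $xF=z_0F,\dots,z_kF=yF$ one edge at a time via the edge-lifting statement (taking $z_0=x$, $z_k=y$) yields $x\sim_{G,u}y$; assuming $\Delta_u(G)$ connected, given $a,b\in V_u(G/F)$ choose preimages $x,y$, which lie in $V_u(G)$, deduce $x\sim_{G,u}y$, and project a path from $x$ to $y$, discarding the repetitions produced by collapsed edges, to obtain a walk from $a$ to $b$ in $\Delta_u(G/F)$.

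The argument is entirely routine; the only point that really has content is the non-generator property of $\frat(G)$ invoked in the key observation. This is precisely what makes the Frattini quotient special here: for a general normal subgroup $N$, Lemma~\ref{easy} forces one to append elements of $N$ when lifting a generating set, changing its size, so the analogue of the lemma for $\Delta_u$ with $N$ in place of $\frat(G)$ fails.
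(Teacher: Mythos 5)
Your proof is correct, and it is exactly the argument the paper has in mind when it states the lemma as ``immediate'': the non-generator property of $\frat(G)$ gives a size-preserving bijective correspondence between minimal generating $u$-sets of $G$ and of $G/\frat(G)$, from which the vertex sets, edges, and paths transfer in both directions. No gaps; your closing remark correctly identifies why the statement is special to the Frattini subgroup.
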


Given a subset $X$ of a finite group $G,$ we will denote by $d_X(G)$ the smallest cardinality
of a set of elements of $G$ generating $G$ together with the elements of $X.$ 

\begin{lemma}{\cite[Lemma 2]{CL3}}\label{modg} Let $X$ be a subset of $G$ and $N$ a normal subgroup of $G$ and suppose that
	$\langle g_1,\dots,g_r, X, N\rangle=G.$
	If $r\geq d_X(G),$ we can find $n_1,\dots,n_r\in N$ so that $\langle g_1n_1,\dots,g_rn_r,X\rangle=G.$
\end{lemma}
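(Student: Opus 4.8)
The statement is a relative form of Gasch\"utz's lemma, and the natural plan is an induction on $|N|$ reducing everything to the case where $N$ is a minimal normal subgroup of $G$.

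If $N=1$ there is nothing to do. Otherwise pick a minimal normal subgroup $M$ of $G$ contained in $N$. Passing to $G/M$ and writing bars for images, we have $\langle \ol{g_1},\dots,\ol{g_r},\ol X,\ol N\rangle=\ol G$, and since any set generating $G$ together with $X$ maps onto a set generating $G/M$ together with $\ol X$, also $d_{\ol X}(G/M)\le d_X(G)\le r$. As $|\ol N|<|N|$, the inductive hypothesis supplies $n_1,\dots,n_r\in N$ with $\langle g_1n_1,\dots,g_rn_r,X,M\rangle=G$; after replacing each $g_i$ by $g_in_i$ we may assume that $N=M$ is a minimal normal subgroup of $G$, the elements we still need to produce lying in $M\le N$. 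If $L:=\langle g_1,\dots,g_r,X\rangle=G$ we take all $m_i=1$; otherwise $L$ is proper and $LM=G$.

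Suppose first that $M$ is abelian. Then $L\cap M$ is normal in $LM=G$ and lies inside the minimal normal subgroup $M$, hence is trivial, so $L$ — and likewise every proper subgroup $H\supseteq\langle X\rangle$ with $HM=G$ — is a complement of $M$ in $G$. Consider the $r$-tuples $(m_1,\dots,m_r)\in M^r$ with $\langle g_1m_1,\dots,g_rm_r,X\rangle=G$: a ``bad'' tuple lies in some proper $H\supseteq\langle X\rangle$ with $HM=G$, and for each such $H$ the set of tuples with $g_im_i\in H$ for all $i$ has cardinality $|H\cap M|^r=1$, since $HM=G$ makes each $\{m\in M:g_im\in H\}$ a coset of $H\cap M$ in $M$. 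Thus it suffices that the number of complements of $M$ containing $X$ be strictly smaller than $|M|^r$; bounding it by the number of all complements, which is $|M/C_M(G)|\cdot|H^1(G/M,M)|$, the hypothesis $r\ge d_X(G)$ forces the desired strict inequality — this is exactly the estimate underlying the usual proof of Gasch\"utz's lemma, the set $X$ only shrinking the relevant family of subgroups. For soluble $G$ this case is all that arises.

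Finally suppose $M=S_1\times\cdots\times S_k$ with the $S_i$ copies of a non-abelian simple group $S$; minimality of $M$ makes $G$ permute the $S_i$ transitively. Here I would follow the proof of Lemma~\ref{nonabmon}: use $r\ge d_X(G)$ to single out a generator (or a modification of one) into which an element $m=(w,s,1,\dots,1)$ of $M$ can be fed — $w$ an involution of $S$, and $s$ chosen via Lemma~\ref{spre} so that the pertinent subgroups of $S$ come out full — and check that $\langle g_1m_1,\dots,g_rm_r,X\rangle$ then projects onto every $S_j$; the standard subdirect‑product dichotomy, together with the fact that $w$ and $s$ are not $\aut S$-conjugate, then forces this subgroup to contain $M$, hence to equal $G$. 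I expect this non-abelian chief factor case to be the main obstacle, as all $r$ generators and the fixed set $X$ must be threaded through the subdirect‑product analysis at once; in the soluble case the only delicate point is the cohomological estimate in the abelian step.
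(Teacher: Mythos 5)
The paper itself offers no proof of this lemma: it is quoted verbatim from \cite[Lemma 2]{CL3}, so your attempt can only be measured against the standard (Gasch\"utz-type) argument. Your reduction to a minimal normal subgroup $M$ is fine, and so is the observation that every proper $H\supseteq\langle X\rangle$ with $HM=G$ is a complement when $M$ is abelian, each contributing exactly one bad tuple. The gap is in the counting step that is supposed to finish the abelian case. You correctly reduce to showing that the number of complements of $M$ containing $X$ is strictly less than $|M|^r$, but you then bound it by the number of \emph{all} complements, $|M/C_M(G)|\cdot|H^1(G/M,M)|$, and assert that $r\geq d_X(G)$ forces this to be $<|M|^r$. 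That is false: take $G=\AGL(1,p)=C_p\rtimes C_{p-1}$, $M=C_p$, $X$ a generator of a point stabilizer, so $d_X(G)=1$ and $r=1$ is permitted; there are exactly $p=|M|^r$ complements (the conjugates of $C_{p-1}$), so your inequality fails, while the lemma holds only because a single one of those complements contains $X$. Even the corrected count (cocycles in $Z^1(G/M,M)$ vanishing on $X$ are determined by their values on $r$ elements generating $G/M$ together with the image of $X$) gives only $\leq|M|^r$, never strictness. The strict inequality is precisely the content of Gasch\"utz's comparison argument, which you have omitted: for each subgroup $H$ with $\langle X\rangle\leq H$ and $HN=G$, the number of tuples in $g_1N\times\cdots\times g_rN$ lying in $H^r$ is $|H\cap N|^r$, independent of the $g_i$; M\"obius inversion over this poset shows the number of good tuples is the same for every $(g_1,\dots,g_r)$ with $\langle g_1,\dots,g_r,X,N\rangle=G$, and it is positive for at least one such tuple because $r\geq d_X(G)$.

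The non-abelian case is not a proof but a plan, and it is the wrong plan. A proper supplement $H$ of a non-abelian minimal normal subgroup need not intersect it trivially, so the complement bookkeeping collapses, and the proposed adaptation of Lemma \ref{spre} and the subdirect-product analysis of Lemma \ref{nonabmon} to $r$ generators plus a fixed set $X$ simultaneously is nowhere carried out; you say yourself you expect it to be the main obstacle. The point you are missing is that the comparison argument just described is completely uniform in $N$: it needs no induction, no case split between abelian and non-abelian chief factors, and no CFSG-dependent input. So the two-case architecture is unnecessary, and as written both halves of it are incomplete --- the abelian half because of the incorrect estimate, the non-abelian half because it is only a sketch.
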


\begin{lemma}\label{indu}
	Let $N$ be a normal subgroup of a finite group group $G$ and consider the projection $\pi:G\to G/N$. Suppose $A\in \Omega_u(G/N)$ and $b \in V_u(G)$ with $bN\in A.$
	Then there exists $B\in \Omega_u(G)$ such that $b\in B$ and $A=\pi(B).$
\end{lemma}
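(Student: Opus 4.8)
The plan is to lift the minimal generating set $A=\{a_1,\dots,a_u\}$ of $G/N$ to a minimal generating set of $G$ of the same size, while controlling the position of the prescribed element $b$. Write $A=\{b N, a_2,\dots,a_u\}$, using that $bN\in A$; since $b\in V_u(G)$, there is some $C\in\Omega_u(G)$ with $b\in C$, so $d_{\{b\}}(G)\le u-1$, and more importantly $\pi(C)$ generates $G/N$, so $\pi(C)$ contains a minimal generating set of $G/N$, which (all minimal generating sets of a finite group having the same size when $G/N$ is, say, soluble, or in general by the results available) has size $u$ exactly; hence $\pi(C)$ is itself a minimal generating set of $G/N$. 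The first step is therefore to pick arbitrary preimages $a_2',\dots,a_u'\in G$ of $a_2,\dots,a_u$, so that $\langle b, a_2',\dots,a_u', N\rangle=G$, and to observe that by minimality of $A$ in $G/N$ no proper subset of $\{bN,a_2 N,\dots,a_u N\}$ generates $G/N$, i.e.\ no proper subset of $\{b,a_2',\dots,a_u'\}$ generates $G$ modulo $N$.

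The second step is to apply Lemma~\ref{modg} with $X=\{b\}$, $g_i=a_{i+1}'$ for $i=1,\dots,u-1$, and $r=u-1$. We need $r\ge d_X(G)=d_{\{b\}}(G)$: this holds because $b$ together with $u-1$ suitable elements generates $G$ (from $C$), so $d_{\{b\}}(G)\le u-1=r$. Lemma~\ref{modg} then yields $n_2,\dots,n_u\in N$ with $\langle b, a_2'n_2,\dots,a_u'n_u\rangle=G$. Set $B=\{b, a_2'n_2,\dots,a_u'n_u\}$; then $\pi(B)=A$ and $b\in B$, and $|B|=u$. It remains to check $B\in\Omega_u(G)$, i.e.\ that $B$ is a \emph{minimal} generating set.

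For minimality, suppose some proper subset $B_0\subsetneq B$ generates $G$. Then $\pi(B_0)\subsetneq A$ generates $G/N$, contradicting minimality of $A$ \emph{unless} $\pi(B_0)=A$, which forces $B_0$ and $B$ to have the same image and hence $B_0=B\setminus\{z\}$ for some $z\in B$ with $zN\in\pi(B_0)$; but every fiber here is a singleton since the $u$ elements of $B$ have $u$ distinct images (the elements of $A$ are distinct), so $\pi(B_0)=A$ already forces $B_0=B$, a contradiction. Hence every proper subset of $B$ has image properly contained in $A$, fails to generate $G/N$, and a fortiori fails to generate $G$; so $B$ is minimal, $B\in\Omega_u(G)$, and the lemma follows. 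The only subtle point — and the main thing to get right — is the inequality $r\ge d_X(G)$ needed to invoke Lemma~\ref{modg}, together with the fact that all minimal generating sets of $G/N$ have the same size $u$ (so that taking preimages of $A$ really does give a set whose image is minimal, not merely a generating set); the rest is bookkeeping on fibers of $\pi$.
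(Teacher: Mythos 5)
Your proof is correct and follows essentially the same route as the paper's: write $A=\{bN,z_1N,\dots,z_{u-1}N\}$, note that $b\in V_u(G)$ gives $d_{\{b\}}(G)\le u-1$, apply Lemma \ref{modg} with $X=\{b\}$ and $r=u-1$ to adjust the preimages by elements of $N$, and observe that the resulting $u$-element set $B$ with $\pi(B)=A$ is automatically minimal because the $u$ elements of $B$ have distinct images and any proper subset therefore maps onto a proper subset of $A$ (the paper leaves this last verification implicit). One caveat: your parenthetical assertion that all minimal generating sets of $G/N$ have the same size is false in general (this is exactly the point of the Tarski irredundant basis theorem and of the distinction between $d(G)$ and $m(G)$ in the paper), and likewise $\pi(C)$ need not be a minimal generating set of $G/N$; fortunately neither claim is used anywhere in your actual argument, since $A$ is given to be minimal and $\pi(B)=A$ by construction.
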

\begin{proof}
	Let  $A=\{bN,z_1N,\dots,z_{u-1}N\}$ and $t=d_{\{b\}}(G).$ Since $b\in V_u(G),$ $t\leq u-1$. 
	By Lemma \ref{modg}, there exist $n_1,\dots,n_{u-1}\in N$ such that
	$\langle b,z_1n_1,\dots,z_{u-1}n_{u-1}\rangle=G$. The set $B:=\{b,z_1n_1,\dots,z_{u-1}n_{u-1}\}$ satisfies the requests of the statement. 
\end{proof}

\begin{lemma}\label{basic}
	Let $N$ be a normal subgroup of a finite group $G$ and let $x, y \in V_u(G).$ If $xN, yN \in V_u(G/N)$ and $xN\sim_{{G/N},u}yN,$ then there exists $n\in N$ such that $x\sim_{G,u} yn.$
\end{lemma}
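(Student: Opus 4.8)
The plan is to mimic, in the size-$u$ setting, the argument that established Lemma~\ref{basic} for the full independence graph, but this time keeping careful track of cardinalities and exploiting the lifting provided by Lemma~\ref{indu}. Concretely, since $xN \sim_{G/N,u} yN$, there is a path $xN = v_0N, v_1N, \dots, v_kN = yN$ in $\Delta_u(G/N)$; it suffices to treat a single edge, i.e.\ to show that if $xN$ and $yN$ are adjacent in $\Delta_u(G/N)$ then there exist $n, m \in N$ with $xn \sim_{G,u} ym$ — and then, since every such $v_iN$ lies in $V_u(G/N)$, we may replace $x$ and $y$ along the way using the conclusion of the previous step and Lemma~\ref{facile}-type transport, finally invoking that $x \in V_u(G)$ has some fixed lift we want to reach. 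So reduce to: $A = \{xN, yN, z_3N, \dots, z_uN\} \in \Omega_u(G/N)$.

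First I would apply Lemma~\ref{indu} to $A$ and to the vertex $x \in V_u(G)$ (note $xN \in A$): this yields $B = \{x, z_3', \dots\} \cup \{\text{lift of } yN\} \in \Omega_u(G)$ with $\pi(B) = A$; in particular, writing $y' $ for the element of $B$ projecting to $yN$, we get $x$ and $y'$ adjacent in $\Delta_u(G)$, so $y' = yn_0$ for some $n_0 \in N$ and $x \sim_{G,u} yn_0$. This already gives $x \sim_{G,u} yn$ for \emph{one} $n \in N$, which is exactly the statement to be proved. The only subtlety is that Lemma~\ref{indu} as stated lifts $A$ so that a \emph{prescribed} $b \in V_u(G)$ with $bN \in A$ appears in $B$; here I take $b = x$, and the element of $B$ lying over $yN$ is then automatically of the form $yn$ for some $n$, since $\pi(B) = A$ forces it into the coset $yN$.

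The main obstacle — and the reason the lemma is phrased with the modest conclusion \lq\lq there exists $n \in N$\rq\rq\ rather than \lq\lq for all $n \in N$\rq\rq — is precisely that one cannot in general freely prescribe the $N$-coset representative on \emph{both} endpoints simultaneously while controlling the size of the generating set: Lemma~\ref{indu} gives us control over one designated vertex, not two, and Lemma~\ref{modg} only lets us adjust the \emph{other} generators by elements of $N$, not the pinned one. Thus, to get from an edge $xN \sim yN$ in the quotient to $xn \sim ym$ for arbitrary prescribed $n,m$, one would need a size-$u$ analogue of Lemma~\ref{facile}, which is false in general (the size can jump). The correct and sufficient statement is the asymmetric one above, and I expect the proof to be essentially the two-line application of Lemma~\ref{indu} just described, with the bookkeeping that $t = d_{\{x\}}(G) \le u-1$ holds because $x \in V_u(G)$.
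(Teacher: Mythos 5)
Your proof is correct and follows essentially the same route as the paper: the paper also reduces to lifting a chain of size-$u$ minimal generating sets of $G/N$ one at a time via Lemma~\ref{indu}, starting with the prescribed vertex $x$ and at each step prescribing the element just reached (which lies in $V_u(G)$ because it belongs to a size-$u$ minimal generating set and whose image lies in the next quotient generating set). Your closing observation that only the asymmetric one-step statement is needed, rather than a size-$u$ analogue of Lemma~\ref{facile}, is exactly the point of the paper's argument.
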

\begin{proof} Since $xN\sim_{G/N,u}yN,$ there exists a sequence $A_1,\dots,A_t$ of elements of $\Omega_u(G/N)$ such that $xN\in A_1,$ $yN\in A_t$ and $A_i\cap A_{i+1}\neq \emptyset$ for $1\leq i\leq t-1.$ We claim that there exists a sequence $B_1,\dots,B_t$ of minimal generating sets of $G$ such that $x\in B_1,$ $\pi(B_i)=A_i$ for $1\leq i\leq t$ and $B_i\cap B_{i+1}\neq \emptyset$ for $1\leq i\leq t-1.$ By Lemma \ref{indu}, there exists a minimal generating set $B_1$ of $G$ with $A_1= \pi(B_1)$ and $x\in B_1.$ Suppose that $B_1,\dots,B_j$ have been constructed for $j<t.$ There exists $g\in B_j$ such that $gN\in A_j\cap A_{j+1}.$ Again by Lemma \ref{indu},  there exists a minimal generating set $B_{j+1}$ of $G$ with $A_{j+1}=\pi(B_{j+1})$ and $g\in B_{j+1}.$ 
\end{proof}

Denote by $d(G)$ and $m(G)$, respectively, the smallest and the largest cardinality of a minimal generating set of $G.$ A nice result in universal algebra, due to Tarski and known with the name of  Tarski irredundant basis theorem (see for example \cite[Theorem 4.4]{bs}) implies that, for every positive integer $k$ with $d(G)\leq k\leq m(G),$ $G$ contains an independent generating set of cardinality $k.$ The proof of this theorem relies on a clever but elementary counting argument which implies also the following result: 
\begin{lemma}\label{tar}
For every $k$ with $d(G)\leq k<m(G)$ there exists a minimal generating set 
$\{g_1,\dots,g_k\}$ with the property that there are $1\leq i\leq k$ and $x_1,x_2$ in $G$ such that $\{g_1,\dots,g_{i-1},x_1,x_2,g_{i+1},\dots,g_k\}$ is again a minimal generating set of $G.$ Moreover $x_1,x_2$ can be chosen with the extra property that  $g_i=x_1x_2.$
\end{lemma}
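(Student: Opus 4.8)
The plan is to deduce the statement from the counting argument behind Tarski's irredundant basis theorem, after a reformulation and a reduction.

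First I would record an elementary \emph{automatic minimality} principle. Suppose $\{a_1,\dots,a_r\}$ is a minimal generating set of $G$, $i\neq j$, and $\langle a_ia_j,\ a_l\ (l\neq i,j)\rangle=G$. Then $\{a_ia_j\}\cup\{a_l:l\neq i,j\}$ is again a minimal generating set, of size exactly $r-1$: removing $a_ia_j$ leaves a subset of $\langle a_l:l\neq i\rangle\neq G$; removing any $a_l$ leaves a subset of $\langle a_m:m\neq l\rangle\neq G$; and $a_ia_j=a_l$ for some $l\notin\{i,j\}$ would force $\langle a_m:m\neq i\rangle$ to generate $G$, against minimality. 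Reading this in reverse, the conclusion of the Lemma for a given $k$ is precisely the assertion that there is a minimal generating set $Q=\{q_1,\dots,q_{k+1}\}$ of size $k+1$ together with indices $a\neq b$ such that $\langle q_aq_b,\ q_l\ (l\neq a,b)\rangle=G$ (one then puts $g_i=q_aq_b$, $x_1=q_a$, $x_2=q_b$, and lets the remaining $g$'s be the other $q_l$'s). So the whole statement reduces to producing such a $Q$ for each $k$ with $d(G)\leq k<m(G)$.

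Next I would reduce to the case $\frat(G)=1$: a subset is a minimal generating set of $G$ if and only if its image is one of $G/\frat(G)$, the invariants $d$ and $m$ are unchanged, and $\langle H,\frat(G)\rangle=G$ forces $H=G$, so all of the non-generation conditions above, hence the whole configuration, transfer between $G$ and $G/\frat(G)$. Then I would run the interpolation used in Tarski's proof. Fix a minimal generating set $B=\{b_1,\dots,b_n\}$ with $n=m(G)$ and a generating set $C$ with $|C|=d(G)$, and for $0\leq i\leq n$ let $\ell(i)$ be the least size of a generating set $S$ with $\{b_1,\dots,b_i\}\subseteq S\subseteq\{b_1,\dots,b_i\}\cup C$. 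One has $\ell(0)=d(G)$, $\ell(n)=n$, and $\ell(i+1)\leq\ell(i)+1$ (adjoin $b_{i+1}$ to an optimal $S$ for $i$ and shrink); choosing $j$ maximal with $\ell(j)\leq k$ then gives $\ell(j)=k$ and $\ell(j+1)=k+1$, realised by optimal sets $L_j$ and $L_j\cup\{b_{j+1}\}$ of these two sizes.

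I expect the main obstacle to be this last step: the optimal ``sandwiched'' set $L_j$ need not itself be a minimal generating set, so the genuine counting argument, rather than the soft manipulations above, is needed to upgrade it to a minimal generating set of size exactly $k$ carrying along a size-$(k+1)$ partner that differs from it by a single splitting. This is the content of Tarski's ``clever but elementary counting argument'' in the group setting, and the place where the group structure matters: the Lemma permits splitting a generator $g$ as an \emph{arbitrary} product $x_1x_2=g$, not merely undoing a prescribed merge, and this extra freedom is essential, since a given minimal generating set of size $k+1$ need not admit any merge of two of its elements that again generates $G$. So I would reproduce that counting argument with the splitting bookkeeping carried through, the technical heart being to show that whenever $\ell$ jumps by $1$ the passage from the smaller to the larger set can, after correcting for non-minimality, be realised as the splitting of one element into two with the correct product.
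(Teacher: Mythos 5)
There is a genuine gap. Your opening reformulation is correct and worth keeping: by your ``automatic minimality'' observation (which you do prove correctly), the lemma is equivalent to the existence of a minimal generating set $Q=\{q_1,\dots,q_{k+1}\}$ of size $k+1$ together with indices $a\neq b$ such that $\langle q_aq_b,\ q_l\ (l\neq a,b)\rangle=G$. But everything after that is scaffolding that never reaches this goal. The interpolation function $\ell$ produces, at the jump $\ell(j)=k$, $\ell(j+1)=k+1$, a pair of generating sets $L_j$ and $L_j\cup\{b_{j+1}\}$ that differ by \emph{adjoining a new element}, and neither set need be minimal; the lemma requires a pair of \emph{minimal} generating sets that differ by \emph{replacing one element $g_i$ by a factorization $x_1x_2=g_i$}. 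These are different relationships, and you offer no mechanism for converting the first into the second: the product structure, which is the entire point of the statement and the only place the group operation enters, never appears in your construction. You acknowledge this yourself by deferring ``the technical heart'' to an unspecified reproduction of ``the counting argument with the splitting bookkeeping carried through''; that deferred step is not routine bookkeeping but is precisely the assertion to be proved. (A smaller imprecision: the claim that a subset of $G$ is a minimal generating set if and only if its image in $G/\frat(G)$ is one requires the additional hypothesis that the projection is injective on the subset; otherwise $\{a,a^3\}\subseteq C_4=\langle a\rangle$ is a counterexample.)

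For what it is worth, the paper itself states Lemma \ref{tar} without proof, asserting only that it follows from the counting argument behind the Tarski irredundant basis theorem, so you have correctly identified the source the author has in mind and there is no written argument in the paper to compare against. But judged as a self-contained proof, your proposal establishes only the (easy) equivalence between splitting and merging, and the part that is missing is the entire substance of the lemma.
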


Recall that for a $d$-generator finite group $G,$ the swap graph $\Sigma_d(G)$ is the graph in which the vertices are the
ordered generating $d$-tuples and in which two vertices $(x_1,\dots,x_d)$ and $(y_1,\dots,y_d)$ are adjacent if and only if
they differ only by one entry.

\begin{prop}\label{swa}
	Let $G$ be a finite soluble group. Then $\Delta_{d(G)}(G)$ is connected.
\end{prop}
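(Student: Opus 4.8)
The plan is to reduce the connectivity of $\Delta_{d(G)}(G)$ to a statement about the swap graph $\Sigma_d(G)$ with $d=d(G)$, and then invoke the known connectivity of swap graphs for finite soluble groups. First I would observe that a vertex $x\in G$ is non-isolated in $\Gamma_d(G)$ precisely when $x$ belongs to some minimal (equivalently, since $|X|=d=d(G)$, some \emph{irredundant}) generating $d$-tuple of $G$; such tuples are exactly the generating $d$-tuples, because any generating $d$-tuple with $d=d(G)$ is automatically irredundant. Hence $V_d(G)$ is the set of entries appearing in generating $d$-tuples, and two such elements $x,y$ are adjacent in $\Delta_d(G)$ iff they occur together in a common generating $d$-tuple.

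\textbf{Key steps.} The main step is the following translation: if $(x_1,\dots,x_d)$ and $(y_1,\dots,y_d)$ are two generating $d$-tuples that are adjacent in the swap graph $\Sigma_d(G)$ — so they agree in all but one coordinate, say the $j$-th — then the underlying unordered sets $\{x_1,\dots,x_d\}$ and $\{y_1,\dots,y_d\}$ share at least the $d-1$ common entries, and each of $x_j$ and $y_j$ is adjacent in $\Delta_d(G)$ to every one of those shared entries (any two distinct entries of a generating $d$-tuple are adjacent in $\Delta_d(G)$). Consequently any $\Sigma_d(G)$-path between two generating $d$-tuples yields, by concatenating these local adjacencies, a walk in $\Delta_d(G)$ between any chosen entry of the first tuple and any chosen entry of the last. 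Therefore, if $\Sigma_d(G)$ is connected, then all entries occurring in generating $d$-tuples lie in a single connected component of $\Delta_d(G)$, i.e. $\Delta_d(G)$ is connected. A small technical point to handle is the degenerate case $d=1$ (where $\Delta_1(G)=\Delta(G)$ restricted to generators of a cyclic $G$, and if $G$ is noncyclic $V_1(G)=\emptyset$), and the case $d(G)=0$, i.e.\ $G=1$; both are trivial.

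\textbf{Main obstacle.} The crux is that we need the swap graph $\Sigma_{d(G)}(G)$ of a finite soluble group to be connected. This is a known result — for finite soluble groups the swap graph on generating tuples of the minimal size is connected (indeed this is part of the circle of results around the solvability of the ``replacement''/Andrews--Curtis type question for soluble groups, and is for instance a consequence of the work underlying \cite{CL3}); I would cite this and note that the special case $d=2$ is exactly \cite[Theorem 1]{CL3}. If one prefers a self-contained argument, one can instead run the same reduction on the quotient $G/N$ for $N$ a minimal normal subgroup, using Lemma \ref{basic} and Lemma \ref{indu} to lift swap-paths modulo $N$ and Lemma \ref{modg} to adjust the lifted entries inside $N$, the base case being $G/\frat(G)$ of bounded structure; but the cleanest route is to quote swap-graph connectivity directly. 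The only genuine care needed is making sure that the lifted tuples remain of size exactly $d(G)$ throughout — which is guaranteed since $d(G/N)\le d(G)$ and, when $d(G/N)<d(G)$, one pads with elements of $N$ using Lemma \ref{modg}, noting that $d(G)-d(G/N)\le 1$ for $N$ minimal normal in a soluble group, so at most one padding element is ever needed.
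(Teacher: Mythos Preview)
Your proposal is correct and follows essentially the same route as the paper: both arguments reduce to the connectivity of the swap graph $\Sigma_{d(G)}(G)$ for finite soluble groups (the paper cites \cite{ds} for this), and then observe that a swap-graph edge forces the two underlying generating sets to share an element, so a swap-graph path yields a $\Delta_d(G)$-walk. Your remark that $\Delta_1(G)=\Delta(G)$ restricted to generators is slightly off---$\Gamma_1(G)$ has no edges at all, so $\Delta_1(G)$ is the null graph---but you correctly flag the case as trivial, matching the paper's treatment; the alternative inductive sketch you include is unnecessary.
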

\begin{proof}Let $d=d(G).$
If $G$ is cyclic, then $\Delta_d(G)$ is a null graph, and there is nothing to prove. Assume $d\geq 2$ and let $x, y \in V_d(G).$ Let $X, Y \in \Omega_d(G)$ with $x\in X$ and $y\in Y.$ By \cite{ds}, the swap graph $\Sigma_d(G)$ is connected, so there exists a path in $\Sigma_d(G)$ joining $X$ to $Y.$ Notice that if $A, B$ are adjacent vertices of $\Sigma_d(G),$ then there exists two connected components $\Gamma_A$ and $\Gamma_B$ of $\Delta_d(G)$ containing, respectively,
$A$ and $B.$ On the other hand $A\cap B\neq \emptyset$, by the way in which the swap graph is defined. Thus $\Gamma_A\cap \Gamma_B \neq \emptyset$ and consequently $\Gamma_A=\Gamma_B$ and all the elements of $A\cup B$ belong to the same connected component. This implies in particular that if $A_1=X, A_2,\dots, A_{t-1},A_t=Y$ is a path joining $X$ and $Y$, then all the elements of $\cup_{1\leq i\leq t}A_i$ belong to the same connected component.
\end{proof}

\begin{proof}[Proof of Theorem \ref{conu}]
	We may assume $d(G)\leq u\leq m(G),$ otherwise $V_u(G)$ is empty. If $u=d(G),$ then the results follows from Proposition \ref{swa}.  So we assume $u>d(G).$ We prove the statement by induction on $|G|.$ By Lemma \ref{frat}, we may assume $\frat(G)=1.$
	
	Let $N$ be a minimal normal subgroup of $G.$ 	Let $K$ be a complement of $N$ in $G.$ We have $d(K)\leq d(G)<u$ and $m(K)=m(G/N)=m(G)-1\geq u-1$ (see \cite[Theorem 2]{min}). By the Tarski irredundant basis theorem, $K$ has a minimal generating set $\{k_1,\dots,k_{u-1}\}$ of size $u-1$ and $\{k_1,\dots,k_{u-1},m\}$ is a minimal generating set of $G$ for every $m\neq 1.$ This implies that all the non-trivial elements of $N$ belong to the same connected component, say $\Gamma,$ of $\Delta(G).$ 
	
	In order to complete our proof, we are going to show that $X\cap \Gamma\neq \emptyset,$ for every minimal generating set $X=\{x_1,\dots,x_u\}$ of $G.$  For each $i \in \{1,\dots,u\},$ there exists $k_i\in K$ and $n_i\in N$ such that $x_i=k_in_i.$  We may order the indices in such a way that $Y=\{k_1,\dots,k_t\}$ is a minimal generating set for $K.$
	
	We distinguish two cases.
	
	a) $t < u.$ Let $H=\langle x_1,\dots, x_t\rangle.$ Since $G=\langle Y\rangle N=HN$ and $H\neq G,$ we deduce that
$H$ is a complement for $N$ in $G$ and $\langle H, x_{t+1}\rangle=G.$
In particular $t=u-1$ and $\{x_1,\dots,x_{t},m\}\in \Omega_u(G)$ for every $1\neq m\in G.$
This implies $\{x_1,\dots,x_{t}\}\subseteq \Gamma \cap X.$

b) $t=u.$ Since $d(K)\leq d(G)<u$ and $m(K)\geq u,$ by Lemma \ref{tar} there exists  $\{z_1,\dots,z_u\}\in \Omega_u(K)$ with the property that $\{z_1z_2,\dots,z_u\}\in \Omega_{u-1}(K).$ We first want to prove that if $n\in N$ and $\tilde z:=z_un\in V_n(G),$ then $\tilde z\in \Gamma.$ 
First suppose that there exists a complement $H$ on $N$ in $G$ containing $\tilde z.$ There exist $m_1,\dots,m_{u-1}\in N$ such that $z_im_i \in H$ for $1\leq i\leq u-1.$ This implies $$H=\langle z_1m_1z_2m_2,z_3m_3,\dots,z_{u-1}m_{u-1},\tilde z\rangle,$$ but then $\{z_1m_1z_2m_2,z_3m_3,\dots,z_{u-1}m_{u-1},\tilde z, m\}\in \Omega_u(G)$  for every $1\neq m\in M.$ Thus $\tilde z$ and $m$ are adjacent vertices of $\Delta_u(G)$ and consequently $\tilde z \in \Gamma.$
Now assume that no complement of $N$ in $G$ contains $\tilde z.$ 
If $1\neq m\in N,$ then  $\langle z_1z_2,z_3,\dots,z_u,m\rangle=G,$  hence $d_{\{z_u\}}(G)\leq u-1.$ Since $G=\langle z_1,z_2,\dots,z_u,N\rangle,$ by Lemma \ref{modg}  there exist $m_1,\dots,m_{u-1}\in M$ such that
	$\langle z_1m_1,\dots,z_{u-1}m_{u-1},z_u\rangle = G.$ As before, this implies  $z_u\in \Gamma$ and consequently $\{z_1m_1,\dots,z_{u-1}m_{u-1}\}\subseteq \Gamma.$ On the other hand, since no complement for $N$ in $G$ contains $\tilde z,$ it must be
		$\langle z_1m_1,\dots,z_{u-1}m_{u-1},\tilde z\rangle = G.$ So 
		$\tilde z$ is adjacent to the vertices $z_1m_1,\dots,z_{u-1}m_{u-1}$ of $\Delta_u(G)$ and consequently $\tilde z \in \Gamma.$
	Now we can conclude our proof. Since $\{x_1N,\dots x_uN\}, 
	\{z_1N,\dots z_uN\} \in \Omega_u(G/N),$ we have $x_1N,$ $z_uN\in V_n(G/N)$, so by induction $x_1N \sim_{G/N,u} z_uN.$ By Lemma \ref{basic} there exists $n\in N$ such that
	$x_1\sim_{G,u}  z_un.$ But we proved before that $z_un \in \Gamma,$ and this implies $x_1\in \Gamma \cap X.$
\end{proof}

\section{Planar graphs}

\begin{lemma}\label{plaq}
	Let $N$ be a normal subgroup of a finite group $G$. If $\Gamma(G)$ is planar, then either $G/N$ is cyclic of prime-power order or $|N|\leq 2.$
\end{lemma}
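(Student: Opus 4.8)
The plan is to argue by contrapositive: assuming that $G/N$ is \emph{not} cyclic of prime-power order and that $|N|\geq 3$, I will exhibit a subdivision of $K_5$ or $K_{3,3}$ inside $\Gamma(G)$, which by Kuratowski's theorem forbids planarity. The starting observation is that the two hypotheses give us two independent sources of vertices of $\Gamma(G)$: on the $G/N$ side, the failure of ``cyclic of prime-power order'' means $d(G/N)\geq 2$, or $G/N$ is cyclic of order divisible by at least two primes; in either case $G/N$ has a minimal generating set of size at least $2$, so $V(G/N)\neq\emptyset$ and in fact $G/N$ has two distinct cosets $x_1N, x_2N$ joined by an edge of $\Delta(G/N)$ (take any minimal generating set of size $\geq 2$ and two of its members, which are automatically non-$\Phi$). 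On the $N$ side, $|N|\geq 3$ gives us at least two distinct non-identity cosets-representatives lying in $N$ to play with, i.e. $N$ contains elements $n_1\neq n_2$ both nontrivial.

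The engine is Lemma \ref{facile}: if $x_1N\sim x_2N$ in $\Delta(G/N)$ via an edge, then \emph{every} translate $x_1n \sim x_2n'$ is an edge of $\Delta(G)$, for all $n,n'\in N$. So from the single $\Delta(G/N)$-edge $\{x_1N, x_2N\}$ I get a complete bipartite subgraph of $\Gamma(G)$ between the two cosets $x_1N$ and $x_2N$, each of size $|N|\geq 3$; that already contains $K_{3,3}$ as soon as $|N|\geq 3$ \emph{unless} the cosets fail to have three representatives each — but each coset has exactly $|N|\geq 3$ elements, so $K_{3,3}\subseteq \Gamma(G)$ and we are done. The only gap in this quick argument is the degenerate possibility that $x_1N$ and $x_2N$ are not actually \emph{distinct} cosets, or more subtly that the vertices involved coincide as group elements across the construction; but distinctness of the two chosen members of a minimal generating set forces $x_1N\neq x_2N$ (they can't be equal mod $N$ and still both be needed), so the two cosets are disjoint and the bipartite graph is genuinely $K_{|N|,|N|}\supseteq K_{3,3}$.

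I should be slightly careful about one edge case: when $G/N$ is cyclic of order $pq$ with two distinct primes but $d(G/N)=1$. Here a minimal generating set has size $1$, so there is no single minimal generating set containing two distinct elements; instead I use the structure from Proposition \ref{cyc}: $G/N$ has elements $g_1, g_2$ of coprime orders $|G/N|/p$ and $|G/N|/q$ forming an edge (indeed a minimal generating pair) of $\Delta(G/N)$. So again $\Delta(G/N)$ has an edge between two distinct cosets, and the same translation argument via Lemma \ref{facile} applies. Thus in all cases the two hypotheses together produce a $K_{3,3}$-subgraph, proving $\Gamma(G)$ non-planar; contrapositively, planarity of $\Gamma(G)$ forces $G/N$ cyclic of prime-power order or $|N|\leq 2$.

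The main obstacle I anticipate is purely bookkeeping: making sure the ``edge of $\Delta(G/N)$ between distinct cosets'' really exists under the precise negation of the hypothesis — this requires splitting into the cases $d(G/N)\geq 2$ and $G/N$ cyclic non-prime-power — and confirming that Lemma \ref{facile} is being applied with the correct quantifiers so that the resulting bipartite graph has all $|N|^2$ edges present (it does, since Lemma \ref{facile} ranges over \emph{all} $n_1,n_2\in N$). No deep combinatorics is needed beyond invoking Kuratowski.
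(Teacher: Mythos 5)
Your argument is correct and is essentially the paper's own proof: both take an edge of $\Delta(G/N)$ (which exists precisely because $G/N$ is not cyclic of prime-power order), lift it via Lemma \ref{facile} to a complete bipartite graph $K_{|N|,|N|}$ between the two cosets, and conclude $|N|\leq 2$ from non-planarity of $K_{3,3}$. The only small slip is your parenthetical claim that a cyclic group of order $pq$ has no minimal generating set containing two distinct elements --- it does, namely the pair $\{g_1,g_2\}$ from Proposition \ref{cyc}, which is exactly what you then use --- so this does not affect the argument.
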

\begin{proof}
	Assume that $G/N$ is not a  cyclic group of prime-power order. Then $\Delta(G/N)$ is not a null-graph. In particular there exist $x$ and $y$ in $G$ such that $xN$ and $yN$ are joined by an edge of $\Gamma(G/N).$ By Lemma \ref{facile}, the subgraph of
	$\Gamma(G)$ induced by $xN\cup yN$ is isomorphic to the complete bipartite graph $K_{a,a},$ with $|a|=N.$ If $\Gamma(G)$ is planar, then $K_{a,a}$ is planar, and this implies $a\leq 2.$
\end{proof}

\begin{prop}
	$\Gamma(C_n)$ is planar if and only if one of the following occurs:
	\begin{enumerate}
		\item $n$ is a prime-power.
		\item $n=p\cdot q,$ where $p$ and $q$ are distinct primes and $p\leq 3$.
		\item $n=4\cdot q,$ where $q$ is an odd prime.
	\end{enumerate}
\end{prop}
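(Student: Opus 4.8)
The plan is to analyze $\Gamma(C_n)$ directly using the structure of the independence graph of a cyclic group, which was already laid bare in the proof of Proposition~\ref{cyc}. Write $n = p_1^{a_1}\cdots p_t^{a_t}$ with the $p_i$ distinct primes. If $t=1$ then $V(C_n) = \emptyset$ and $\Gamma(C_n)$ is edgeless, hence trivially planar; this handles case (1). So assume $t \geq 2$. The non-isolated vertices are exactly the elements $x$ with $x \notin \frat(C_n)$, i.e.\ those $x$ whose order is divisible by $p_i^{a_i}$ for at least one $i$; and $x$ is adjacent to $y$ precisely when $\{x,y\}$ extends to a minimal generating set, which for a cyclic group means there is a partition of the index set $\{1,\dots,t\}$ witnessing that $x$ and $y$ ``cover complementary sets of prime-power parts.'' More concretely, $x$ and $y$ are adjacent iff $\langle x\rangle$ and $\langle y\rangle$ are both proper, $\langle x, y\rangle = C_n$, and for the set $S_x$ of indices $i$ with $p_i^{a_i} \mid |x|$ and similarly $S_y$, we need $S_x \cup S_y = \{1,\dots,t\}$ together with a disjointness-type condition ensuring minimality (no proper subset generates). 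I would first pin down this adjacency criterion precisely, counting vertices of each ``type'' by which subset $S \subsetneq \{1,\dots,t\}$ of prime-power parts they cover.

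Next I would dispose of the case $t \geq 3$ by exhibiting a forbidden subgraph. When $t \geq 3$ there are many elements of order exactly $p_1^{a_1}\cdots p_{t-1}^{a_{t-1}}$ (covering $S = \{1,\dots,t-1\}$) and many of order exactly $n/p_1^{a_1}$ or more cheaply $n/p_t^{a_t} \cdot(\text{something})$... rather, the cleanest route: the elements $g$ with $S_g = \{1,\dots,t-1\}$ and the elements $h$ with $S_h \supseteq \{t\}$ chosen so that $g \sim h$ form a large complete bipartite subgraph, and since $n \geq 2\cdot 2 \cdot 3 = 12$ forces both sides to have size $\geq 2$ (in fact one easily gets $K_{3,3}$ or $K_5$), planarity fails. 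I expect the bookkeeping here to be the main obstacle: one must choose the two vertex classes so that they genuinely form a complete bipartite graph in $\Gamma(C_n)$ and show each class has at least $3$ elements (or produce $K_5$), using $\varphi$-function lower bounds like $\varphi(p_i^{a_i}) \geq 1$ and the fact that $t\geq 3$ guarantees enough ``room.'' A convenient trick is to invoke Lemma~\ref{plaq} with a suitable quotient: if $n$ is divisible by three distinct primes, take $N$ with $C_n/N \cong C_{p_1 p_2 p_3}$-ish non-prime-power and $|N|$ large, forcing a $K_{a,a}$ with $a \geq 3$. Indeed Lemma~\ref{plaq} already gives: if $\Gamma(C_n)$ is planar and $C_n/N$ is not cyclic of prime-power order then $|N| \leq 2$; choosing quotients cleverly will knock out $t \geq 3$ and also restrict the exponents when $t = 2$.

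For $t = 2$, write $n = p^a q^b$ with $p < q$. Using Lemma~\ref{plaq} with $N$ the subgroup of order $p^{a-1}q^{b-1}$ (so that $C_n/N \cong C_{pq}$, not a prime power), planarity forces $p^{a-1}q^{b-1} \leq 2$, hence $(a-1)(\text{stuff})$... more precisely $p^{a-1}q^{b-1} \in \{1, 2\}$, giving either $a = b = 1$, or $p = 2, a = 2, b = 1$ (so $n = 4q$), or $p=2,a=1,b=1$ already covered, or $q=2$ impossible since $q > p \geq 2$. So the only survivors are $n = pq$ and $n = 4q$ with $q$ odd. It then remains to (i) rule out $n = pq$ with $5 \leq p < q$, and (ii) verify that $n = pq$ with $p \in \{2,3\}$ and $n = 4q$ genuinely give planar $\Gamma(C_n)$. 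For (i), with $p \geq 5$ the vertex set of order-$q$ elements has size $\varphi(q) = q - 1 \geq p - 1 \geq 4$ and the order-$p$ elements number $\varphi(p) = p-1 \geq 4$, and these $p-1$ and $q-1$ vertices together with adjacency ``$\langle x,y\rangle = C_{pq}$'' form $K_{p-1,q-1} \supseteq K_{4,4} \supseteq K_{3,3}$ — not planar. For (ii), one draws the graph explicitly: when $p = 2$, $\Gamma(C_{2q})$ has one vertex of order $q$... wait, $\varphi(q) = q-1$ vertices of order $q$ on one side and $\varphi(2) = 1$ vertex (the involution) on the other plus the generators; careful analysis shows it is a ``book'' or a subdivision-free small graph (a star together with generators forming triangles), planar by inspection; similarly $\Gamma(C_{3q})$ reduces to $K_{2,q-1}$ plus pendant structure, and $K_{2,m}$ is planar; and $\Gamma(C_{4q})$ needs a slightly more careful but still elementary planar embedding. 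The main obstacle overall is the careful combinatorial description of $\Gamma(C_n)$'s edge set and the explicit planar drawings in case (ii) — the non-planarity directions are comparatively painless once the $K_{3,3}$/$K_{a,a}$ mechanism and Lemma~\ref{plaq} are in hand.
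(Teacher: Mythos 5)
Your proposal is correct and takes essentially the same route as the paper: apply Lemma~\ref{plaq} to the quotient $C_n/N\cong C_{pq}$, with $p<q$ the two smallest prime divisors of $n$, to force $|N|\le 2$ (which simultaneously kills the case of three or more prime divisors and bounds the exponents), and then identify the graph on the non-isolated vertices as a complete bipartite graph and read off planarity. One small correction to your case (ii): by the paper's first lemma the generators of $C_n$ are isolated vertices of $\Gamma(C_n)$, so there are no ``triangles'' or ``pendant structure'' --- one has exactly $\Delta(C_{pq})\cong K_{p-1,q-1}$ and $\Delta(C_{4q})\cong K_{2,2(q-1)}$, which makes the planarity verifications immediate without any explicit drawing.
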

\begin{proof}If $n=p^a$ is a prime power, then $\Gamma(C_n)$ is an edgeless graph, and consequently it is planar. Assume that $n$ is not a prime power and let $p < q$ be the two smaller prime divisors of $n.$ We have that $C_n$ contains a normal subgroup $N$ such that $G/N\cong C_{p \cdot q},$ and  it follows from Lemma \ref{plaq} that $|N|\leq 2.$ If $|N|=1,$ then $\Delta(C_n)\cong K_{p-1,q-1}$, and consequently $\Gamma(C_n)$ is planar if and only if $p\leq 3.$ If $|N|=2$, then $p=2$ and $\Delta(G)\cong K_{2,2(q-1)}$, which is a planar graph.
\end{proof}

\begin{lemma}\label{mindue}
	Let $G$ be a finite group. If $G$ is not cyclic, then  there exists a normal subgroup $N$ of $G$ with the property that $d(G/N)=2$ but $G/M$ is cyclic for every normal subgroup $M$ of $G$ with $N<M.$
\end{lemma}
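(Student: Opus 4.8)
The statement asserts that every non-cyclic finite group $G$ has a normal subgroup $N$ which is minimal with respect to the property $d(G/N)=2$; equivalently, $N$ is a normal subgroup such that $d(G/N)=2$ and such that every normal subgroup strictly containing $N$ gives a cyclic quotient. The plan is to produce $N$ directly as a maximal element of a suitable poset. First I would observe that the collection
$$
\mathcal{N}=\{N\trianglelefteq G \mid d(G/N)\geq 2\}
$$
is nonempty: since $G$ is not cyclic, $d(G)\geq 2$, so $N=1$ belongs to $\mathcal{N}$. Because $G$ is finite, $\mathcal{N}$ has a maximal element $N$ with respect to inclusion. For such an $N$, every normal subgroup $M$ with $N<M$ satisfies $M\notin\mathcal{N}$, i.e. $d(G/M)\leq 1$, i.e. $G/M$ is cyclic. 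So it remains only to show $d(G/N)=2$, which given $d(G/N)\geq 2$ means ruling out $d(G/N)\geq 3$.

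The heart of the argument is therefore: if $N\trianglelefteq G$ is such that $d(G/N)\geq 3$, then $N$ is not maximal in $\mathcal{N}$, i.e. there is a normal subgroup $M>N$ with $d(G/M)\geq 2$. Passing to the quotient $G/N$, this reduces to the following clean statement about a group $H$ with $d(H)\geq 3$: there exists a nontrivial normal subgroup $M\trianglelefteq H$ with $d(H/M)\geq 2$. I would prove this by taking $M$ to be a minimal normal subgroup of $H$. The key input is the standard fact that $d(H/M)\geq d(H)-1$ whenever $M$ is a (chief-factor-sized) normal subgroup — more precisely, $d(H)\leq d(H/M)+1$ for any normal $M$ that is generated by one conjugacy class, and certainly this holds when $M$ is abelian or more generally using that a minimal normal subgroup is a product of isomorphic simple groups; in all cases $d(M)$ as a normal subgroup is bounded and one gets $d(H/M)\geq d(H)-1\geq 2$. (Indeed the paper has just invoked, via \cite{min}, that $m(G/N)=m(G)-1$ for $N$ minimal normal with a complement, and the analogous lower bound $d(G/N)\geq d(G)-1$ is the elementary direction: a generating set of $G/N$ of size $k$ lifts, together with a generator of the cyclic-or-trivial obstruction coming from $M$, to a generating set of $G$ of size $\leq k+1$.)

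Concretely I would argue: let $M$ be a minimal normal subgroup of $H$. If $d(H/M)\geq 2$ we are done. Otherwise $H/M$ is cyclic, generated by $xM$ say; then $H=\langle x\rangle M$, so $H$ is generated by $x$ together with $d_M(H)$ further elements where $d_M(H)$ counts extra generators needed modulo the normal closure considerations — but $M$ minimal normal means $M$ is generated (as a normal subgroup of $H$) by a bounded number of elements, and in fact when $H/M$ is cyclic one checks $d(H)\leq 1+\dim$ or $d(H)\leq 2$ in the nonabelian-socle case, contradicting $d(H)\geq 3$. The one case needing a touch of care is $M$ abelian: there $H=\langle x\rangle\ltimes M$ with $M$ an irreducible $\langle x\rangle$-module, and $d(H)\leq 2$ by a direct argument (Gaschütz, or just: $H$ is generated by $x$ and one element of $M$). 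So in every case $d(H/M)\geq 2$, completing the reduction and hence the lemma.

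\textbf{Expected main obstacle.} The only delicate point is the inequality $d(H/M)\geq d(H)-1$ together with handling the case where $M$ is a nonabelian minimal normal subgroup: one must know that a group with cyclic quotient by a (possibly nonabelian) minimal normal subgroup is $2$-generated, which is where results like Lemma \ref{nonabmon} or the crown/Gaschütz machinery enter; but all of this is either elementary or already available in the literature the paper cites, so the lemma follows without real difficulty once the poset-maximality reformulation is in place.
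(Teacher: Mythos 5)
Your argument is correct and is essentially the paper's: both proofs extract a maximal element from a finite poset of normal subgroups, and both rest on the same key input, namely the inequality $d(H)\le d(H/M)+1$ for $M$ a minimal normal subgroup of $H$ (this is \cite[Theorem 1.3]{old}, exactly the reference the paper invokes). The only organizational difference is that you take a maximal element of $\{N\trianglelefteq G : d(G/N)\ge 2\}$ (trivially nonempty) and then apply the chief-factor bound to rule out $d(G/N)\ge 3$, whereas the paper first descends a chief series to locate a quotient with $d=2$ and then takes a maximal element of $\{M\trianglelefteq G : d(G/M)=2\}$; the two formulations are interchangeable.
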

\begin{proof}
Let $\mathcal M$ be the set of the normal subgroups $M$ of $G$ with the property that $d(G/M)=2.$ We claim that if $G$ is not cyclic, then
$\mathcal M \neq \emptyset.$ Indeed let $$1=N_t<\dots <N_0=G$$ be a chief series of $G$ and let $j$ be the smallest positive integer with the property that $G/N_j$ is not cyclic. By  \cite[Theorem 1.3]{old}, 
$d(G/N_j)=2.$ Once we know that $\mathcal M$ is not empty, any subgroup in $\mathcal M$ which is maximal with respect to the inclusion satisfies the requests of the statement.
\end{proof}

\begin{prop}Let $G$ be a finite, non-cyclic group. Then $\Gamma(G)$ is planar if and only if $G\in \{C_2\times C_2, C_2\times C_4, D_4, Q_8, \perm(3)\}$
\end{prop}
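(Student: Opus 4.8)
The plan is to combine Lemma~\ref{mindue} with the structural dichotomy it provides, then push the planarity constraint from the quotient $G/N$ (where $d(G/N)=2$) back up to $G$. By Lemma~\ref{mindue}, since $G$ is non-cyclic there is a normal subgroup $N$ with $d(G/N)=2$ while every strictly larger quotient is cyclic. Set $Q=G/N$. Since the 2-generated finite groups with planar generating graph were classified in \cite{planar}, and $\Gamma_2(Q)$ is the generating graph of $Q$ (as $Q$ is non-cyclic 2-generated), planarity of $\Gamma(G)$ forces $\Gamma(Q)$ to be planar, which already puts $Q$ on a short list. First I would invoke Lemma~\ref{plaq}: either $Q$ is cyclic of prime-power order — impossible here since $d(Q)=2$ — or $|N|\leq 2$. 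So the whole problem reduces to the case $|N|\leq 2$, and in particular $G$ is either 2-generated (if $N=1$) or a cyclic-by-($|N|=2$) extension of a 2-generated planar-generating-graph group.

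Next I would handle $N=1$, i.e.\ $d(G)=2$: here $\Gamma(G)$ has $\Gamma_2(G)$ as a subgraph (the generating graph), but $\Gamma(G)$ may have more edges coming from minimal generating sets of size $>2$, so planarity of $\Gamma(G)$ is \emph{stronger} than planarity of the generating graph. I would take the list of 2-generated groups with planar generating graph from \cite{planar} and, for each, decide whether the extra edges of $\Gamma(G)$ (if any) destroy planarity. For groups with $m(G)=2$ there are no extra edges and we keep exactly those on the list; for groups with $m(G)\geq 3$ one must check the larger minimal generating sets — this is where $\perm(3)$, $D_4$, $Q_8$, $C_2\times C_2$, $C_2\times C_4$ should survive and (presumably) the remaining candidates from \cite{planar} should be eliminated by exhibiting a $K_5$ or $K_{3,3}$ minor arising from a size-3 minimal generating set.

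For $|N|=2$, I would argue that $G/N$ must itself have planar $\Gamma$ and satisfy $d(G/N)=2$, so $G/N$ lies on the already-determined list; then apply Lemma~\ref{facile}, which shows that each edge of $\Delta(G/N)$ lifts to a complete bipartite graph $K_{2,2}$ between the two cosets, and that adjacent edges of $\Delta(G/N)$ produce overlapping such $K_{2,2}$'s in $\Gamma(G)$. One checks that unless $G/N$ is extremely small this creates a non-planar configuration; the survivors should collapse into the groups already listed (e.g.\ $C_2\times C_4$ arising over $C_2\times C_2$ or over $C_4$, $D_4$ and $Q_8$ over $C_2\times C_2$). Finally I would verify directly, by drawing or by a minor-count, that each of $C_2\times C_2$, $C_2\times C_4$, $D_4$, $Q_8$, $\perm(3)$ does have planar $\Gamma(G)$, completing the equivalence.

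\textbf{Main obstacle.} The delicate point is the $N=1$ case: planarity of $\Gamma(G)$ is not the same as planarity of the generating graph, so the classification in \cite{planar} is only a starting point, and one must carefully analyse the contribution of \emph{all} minimal generating sets — in particular those of size $m(G)\geq 3$ — to rule out the borderline 2-generated groups and to confirm that the five exceptional groups genuinely remain planar once these extra edges are included. Organising this finite check cleanly (ideally by identifying a single forbidden subconfiguration, such as a triple of elements forming two different size-3 minimal generating sets that forces $K_{3,3}$) will be the crux of the argument.
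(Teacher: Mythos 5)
Your opening moves coincide with the paper's (choose $N$ as in Lemma~\ref{mindue}, lift edges via Lemma~\ref{facile}, apply Lemma~\ref{plaq} to get $|N|\le 2$), but from that point on the proposal leaves the decisive work undone, and it misses the observation that makes that work essentially trivial. The subgroup $N$ of Lemma~\ref{mindue} is chosen so that \emph{every} quotient of $G$ strictly above $N$ is cyclic; you quote this property but never use it. Combined with the classification in \cite{planar} it forces $G/N\in\{C_2\times C_2,\perm(3)\}$, since every other non-cyclic $2$-generated group with planar generating graph has a non-cyclic proper quotient. Together with $|N|\le 2$ this reduces everything to an explicit list of groups of order at most $12$: $C_2\times C_2$, $\perm(3)$ (when $N=1$) and $C_2\times C_4$, $D_4$, $Q_8$, $C_2\times C_2\times C_2$, $D_6$, $C_3\rtimes C_4$ (when $|N|=2$). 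Because you only exploit $d(G/N)=2$, you are left contemplating the entire list from \cite{planar} and an open-ended ``extra edges'' analysis.

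That ``extra edges'' issue --- which you correctly identify as the crux, but then resolve only with ``presumably'' and ``should be eliminated'' --- evaporates once the candidate list is in hand: every group that is supposed to survive satisfies $d(G)=m(G)=2$ (for the $2$-groups this is the Burnside basis theorem; for $\perm(3)$ it is a direct check), so $\Gamma(G)$ \emph{is} the generating graph and there are no additional minimal generating sets to worry about. The three remaining candidates are killed by explicit non-planar subgraphs: $\Delta(C_2\times C_2\times C_2)\cong K_7$; in $D_6$ the six non-central involutions induce a $K_6$; and in $C_3\rtimes C_4$ the six elements of order $4$ and the four elements of order divisible by $3$ yield a $K_{6,4}$. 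As written, your argument establishes neither implication completely --- the forward direction stops at an unverified finite check, and the backward direction defers the planarity of the five listed groups --- so there is a genuine gap, though one that is entirely fixable by inserting the two points above.
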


\begin{proof}Let $G$ be a non-cyclic group. Choose a normal subgroup $N$ of $G$ as described in Lemma \ref{mindue}. It follows from Lemma \ref{facile} that $\Gamma(G)$ contains a subgraph isomorphic to $\Delta(G/N).$ So if $\Gamma(G)$ is planar, then $\Gamma(G/N)$ is  planar and $|N|\leq 2.$ By \cite{planar} either $G/N\cong C_2\times C_2$ or $G/N\cong \perm(3).$ If $G/N\cong C_2\times C_2$ then either $d(G)=m(G)$ and $G\in \{C_2\times C_2, C_2\times C_4, D_4, Q_8\},$ or
	$d(G)=m(G)=3$ and $G\cong C_2\times C_2\times C_2.$ In the last case $\Delta(G)\cong K_7$ is not planar. In the other cases, $\Gamma(G)$ coincides with the generating graph of $G$ and it is planar. If $G/N \cong \perm(3),$ then  $G\cong \perm(3)$, $G\cong D_6$ or $G\cong C_3\rtimes C_4.$ If $G\cong S_3$ then $\Gamma(G)$ coincides with the generating graph and it is planar. If $G\cong D_6,$ then the six non-central involutions induces a complete subgraph, so $\Gamma(G)$ is not planar. If $G\cong C_3\rtimes C_4,$ then the subset $A\cup B$, where $A$ is the set of the six elements of order 4 and $B$ is the set of the four elements with order divisible by 3, induces a non planar graph containing an isomorphic copy of $K_{6,4}.$
\end{proof}

\section{Examples and questions}\label{last}
 The minimal generating sets for $\perm(4)$ are described in \cite{caca}. We have that $d(\perm(4))=2$ and $m(\perm(4))=3$ and the three graphs $\Gamma_2(\perm(4)),$ $\Gamma_3(\perm(4))$ and $\Gamma(\perm(4))$ are described in the following tables, where the first column contains a representative $x$ of a conjugacy class of $\perm(4)$, the second column describes the set of the elements of $\perm(4)$
adjacent to $x$ in the graph and  the third columns gives the degree of $x$ in the graph. We denote by $X_i$ the set of $i$-cycles (for $2\leq i\leq 4)$ in $\sym(4)$ and by $Y$ the set of the double transpositions.

\begin{table}[h!]
	\begin{center}
\begin{tabular}{|c|c|c|}
	\hline
\multicolumn{3}{|c|}{$\Gamma_2(\perm(4))$}\\
\hline
\hline	
	(1,2)(2,3) & $\emptyset$ &          0 \\
	\hline
	
	(1,2) & $\{(2,3,4)^{\pm 1},(1,3,4)^{\pm 1}, (1,2,3,4)^{\pm 1},(1,2,4,3)^{\pm 1}\}$ &          8 \\
	\hline
	
	(1,2,3) & $X_4\cup \{(1,4), (2,4), (3,4)\}$ &          9 \\
	\hline
	
	(1,2,3,4) & $X_3 \cup \{(1,2),(1,4),(2,3),(3,4),(1,3,2,4)^{\pm 1},(1,2,4,3)^{\pm 1}\}$ &         16 \\
	\hline
	\end{tabular} 
\end{center}
\end{table}

\begin{table}[h!]
\begin{tabular}{|c|c|c|}
	\hline
\multicolumn{3}{|c|}{$\Gamma_3(\perm(4))$}\\	
\hline
\hline
	(1,2)(3,4) & $X_2\cup X_3$ &          14 \\
	\hline
	
	(1,2) & $Y\cup \{(1,2,3)^{\pm 1},(1,2,4)^{\pm 1},(1,3),(1,4),(2,3),(2,4),(3,4)\}$ &          12 \\
	\hline
	
	(1,2,3) & $Y\cup \{(1,2), (1,3), (2,3), (1,2,4)^{\pm 1},(1,3,4)^{\pm 1},(2,3,4)^{\pm 1}\}$ &         12 \\
	\hline
	
	(1,2,3,4) & $\emptyset$ &     0\\
	\hline
	\end{tabular}  
\end{table}

\begin{table}[h!]
\begin{tabular}{|c|c|c|}
	\hline
\multicolumn{3}{|c|}{$\Gamma(\perm(4))$}\\	
\hline
\hline

	(1,2)(3,4)& $X_2\cup X_3$&14 \\
	\hline
	(1,2)&$Y\!\cup\! X_3\!\cup\! \{(1,3),\!(1,4),\!(2,3),\!(2,4),\!(3,4),\!(1,2,3,4)^{\pm 1}\!,(1,2,4,3)^{\pm 1}\}$ &          20 \\
	\hline
	
	(1,2,3)& $Y\cup X_2\cup  X_4\cup \{(1,2,4)^{\pm 1},(1,3,4)^{\pm 1},(2,3,4)^{\pm 1}\}$ &         21 \\
	\hline
	
	(1,2,3,4) & $X_3 \cup \{(1,2),(1,4),(2,3),(3,4), (1,3,2,4)^{\pm 1},(1,2,4,3)^{\pm 1}\}$ &         16\\
	\hline
	\end{tabular}
\end{table}
\noindent  Denote by $\omega(\Gamma)$ the clique number of a graph $\Gamma.$ By \cite[Theorem 1.1]{LM2}, we have
 $\omega(\Gamma_2(\perm(4))=4$ and a maximal clique is
$\{\!(1,2,3,4),(1,2,4,3),(1,3,2,4),(1,2,3)\!\};$  $\omega(\Gamma_3(\perm(4)))=7$  and a maximal clique is $X_2\cup \{(1,2)(3,4)\};$
$\omega(\Gamma(\perm(4)))=11$  and a maximal clique is $X_2\cup \{(1,2)(3,4), (1,2,3), (1,2,4), (1,3,4), (2,3,4)\}.$  However it is not in general true that $\omega(\Gamma_2(\perm(n))\leq \omega(\Gamma_{n-1}(\perm(n))$. Indeed let $n$ be a sufficiently large odd integer. By \cite[Theorem 1]{bla}, $\omega(\Gamma_2(\perm(n))=2^{n-1}$ while by \cite[Theorem 2.1]{caca} a non-isolated vertex of
$\omega(\Gamma_{n-1}(\perm(n))$ is either a transposition or a 3-cycle or a double transposition, so $\omega(\Gamma_{n-1}(\perm(n))\leq \binom n 2+2\cdot \binom n 3+ 3\cdot \binom n 4.$
The independence number of $\Gamma_2(\perm(4))$ is 12 and a maximal independent set is $X_3\cup Y \cup\{id\}.$ The independence number of $\Delta_3(\perm(4))$ is 8 and a maximal independent set is $X_4\cup\{(1,2),(1,3,4),(1,4,3),id\}.$  The independence number of $\Delta(\perm(4))$ is 6 and a maximal independent set is $Y\cup\{(1,2,3,4),(1,4,3,2),id\}.$ 
For $u\in \{1,2\},$ the degree of the vertex of $\Gamma_u(\perm(4))$ corresponding to the element $g$ is divisible by the order of $g.$ When $u=2,$ this follows from a more general result. Indeed, by \cite[Proposition 2.2]{euler}, if $G$ is a 2-generated group and $g\in G$, then $|g|$ divides the degree of $g$ in the generating graph of $G.$ However this cannot be generalized to $\Gamma_u(G)$ for arbitrary values of $u.$ For example, consider the dihedral group $G=\langle a, b \mid a^6, b^2, (ab)^3\rangle$ of degree 6. Then $\{a^2,a^3,a^ib\}\in \Omega_3(G)$ for $0\leq i\leq 5$ and there are precisely 7 elements adjacent to $a^2$ in $\Gamma_3(G)$: $a^3$ and $a^ib$ for $0\leq i\leq 5.$ We propose the following question.
\begin{question} Let $G$ be a finite group and  $g\in G.$ Does $|g|$ divide the degree of $g$ in $\Gamma_{d(G)}(G)?$
\end{question}
For a finite group $G$, let 
$$W(G)=\bigcap_{d(G)\leq u\leq m(G)}V_u(G).$$
We have seen that $W(\perm(4))=X_2\cup X_3\neq V(\perm(4)).$ 
If $d(G)=m(G),$ then $V(G)=W(G)$ by definition. One may ask whether the converse is true.
\begin{question}\label{vw}
Does  $V(G)=W(G)$ imply $d(G)=m(G)$?
\end{question}
The answer is positive in the soluble case.
\begin{prop}
	Let $G$ be a finite soluble group. If $V(G)=W(G),$ then $d(G)=m(G).$
\end{prop}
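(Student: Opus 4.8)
I want to prove the contrapositive: if $G$ is a finite soluble group with $d(G) < m(G)$, then $V(G) \neq W(G)$, i.e.\ there exists a vertex $g \in V(G)$ that is isolated in $\Gamma_u(G)$ for some $u$ with $d(G) \le u \le m(G)$. The natural candidate is an element realising the maximal length $m(G)$: take an independent generating set $X = \{g_1,\dots,g_m\}$ with $m = m(G)$, and try to show that some $g_i$ (or a suitably chosen element) lies in $V(G)$ but not in $V_{d(G)}(G)$, or more generally not in $V_u(G)$ for some intermediate $u$. The example $G = \sym(4)$ with $(1,2)(3,4)$ — which lies in $V(G)$ but not in $V_2(G) = V_{d(G)}(G)$ — is exactly the model to generalise.

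\textbf{Key steps.} First I would reduce to $\frat(G) = 1$, since passing to $G/\frat(G)$ changes neither $d$, $m$, nor the relevant vertex sets. Next, invoke the structure theory of soluble groups with $d < m$: by the theory behind \cite{min} (the behaviour of $d$ and $m$ under taking quotients by a minimal normal subgroup), there is a minimal normal subgroup $N$ of $G$ and a complement $K$ with $m(K) = m(G) - 1$ and $d(K) \le d(G)$. The key dichotomy to exploit: an element $n \in N \setminus \{1\}$ together with a size-$(m-1)$ irredundant generating set of $K$ (which exists by the Tarski irredundant basis theorem since $d(K) \le d(G) \le m(G)-1 = m(K)$, provided $d(G) < m(G)$) forms a minimal generating set of $G$ of size $m(G)$; so $n \in V(G)$, indeed $n \in V_{m(G)}(G)$. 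I then want to argue that such an $n$ can be chosen so that it is \emph{not} in $V_{d(G)}(G)$ — intuitively, $n \in N$ is "too small" to participate in a shortest generating set, since in a shortest generating set every generator must project onto a generator of $G/N$ when $G/N$ is non-cyclic, while $n$ projects to the identity. The case $G/N$ cyclic needs separate (easy) handling: then $d(G) = 2$ and one analyses directly.

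\textbf{The main obstacle.} The delicate point is ruling out that every non-isolated vertex is simultaneously non-isolated at \emph{all} lengths $u \in [d(G), m(G)]$; equivalently, producing a concrete witness $g$ with a length at which it drops out. The element $n \in N$ is a good witness for "lies in $V_{m(G)}(G)$", but I must confirm $n \notin V_{d(G)}(G)$ (or $n \notin V_u(G)$ for some $u$), and here I need: if $\{h_1,\dots,h_{d(G)}\}$ is a minimal generating set of $G$ with $n \in$ it, then modulo $N$ it would have to be an irredundant generating set of $G/N$ of size $d(G)$ containing the trivial coset — forcing $d(G/N) < d(G)$, combined with the fact that $d(G/N) \ge d(G) - 1$ always and the precise conditions (from \cite{min, old}) under which $d(G/N) = d(G) - 1$ versus $d(G/N) = d(G)$. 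Disentangling these numerical constraints — and handling the boundary case where $N$ has a "strange" complement structure, i.e.\ $N$ is central or $G$ splits over $N$ in more than one way (the "$\tilde z$" analysis in the proof of Theorem \ref{conu} suggests this is where bookkeeping gets heavy) — is where the real work lies. I expect the proof to proceed by a careful induction on $|G|$, peeling off $N$ and applying the inductive hypothesis to a quotient, with the witness either produced directly inside $N$ or pulled back from the quotient.
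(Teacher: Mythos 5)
There is a genuine gap, and it sits exactly where you place your ``main obstacle''. Your plan is to exhibit a witness $n\in N\setminus\{1\}$ with $n\in V_{m(G)}(G)$ but $n\notin V_{d(G)}(G)$. But under the actual hypothesis of the proposition this is impossible in the decisive case: once you reduce to $\frat(G)=1$ and $G$ non-cyclic, every nontrivial element of $G$ is a non-isolated vertex of $\Gamma(G)$, so $V(G)=G\setminus\{1\}$, and the assumption $V(G)=W(G)$ forces $V_{d(G)}(G)=V(G)=G\setminus\{1\}$, i.e.\ $d_{\{g\}}(G)=d(G)-1$ for \emph{every} $g\neq 1$. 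In other words, for the groups one actually has to rule out, \emph{no} element drops out at level $u=d(G)$; the failure of $V(G)=W(G)$, if $d(G)<m(G)$, must be detected at some level $u>d(G)$ (typically $u=m(G)$). Your numerical analysis of $d(G/N)$ versus $d(G)$ can only certify $n\notin V_{d(G)}(G)$ when $d(G/N)=d(G)$, and that situation simply does not arise for the critical class of groups. Equivalently, in the contrapositive formulation: a soluble $G$ with $d(G)<m(G)$ may well have $V_{d(G)}(G)=V(G)$ a priori, and then your candidate witness does not exist at the bottom level.

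The paper's proof turns this observation into the whole argument: the condition $V_{d(G)}(G)=G\setminus\{1\}$ says exactly that $G$ is ``efficiently generated'' in the sense of \cite{tra}, and by \cite[Corollary 2.20, Theorem 2.21]{tra} such a soluble $G$ with $\frat(G)=1$ is either elementary abelian (where $d=m$ trivially) or of the explicit form $V^{r(d-2)+1}\rtimes H$ with $H$ irreducible on $V$. For these groups one then produces elements that fail to lie in $V_{m(G)}(G)$ unless $m=d$: if $r>1$ the element $w=(v_1,v_2,1,\dots,1)$ with $v_1,v_2$ linearly independent over $\End_G(V)$ generates an $H$-submodule isomorphic to $V^2$, capping the size of any minimal generating set through $w$ at $m-1$; if $r=1$ the generator $h$ of the (cyclic) complement $H$ lies only in minimal generating sets of size at most $d$. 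So the witness is found at the \emph{top} of the interval $[d(G),m(G)]$, not the bottom, and the structural classification from \cite{tra} is the essential input that your sketch does not supply. Your reduction to $\frat(G)=1$ and your observation that nontrivial elements of $N$ lie in $V_{m(G)}(G)$ are both correct, but they do not lead to a proof along the route you describe.
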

\begin{proof}
Let $d=d(G),$  $m=m(G).$ Since $V_u(G/\frat(G))=V_u(G)\frat(G)/\frat(G),$ we may assume $\frat(G)=1.$ First assume that $G$ is cyclic. Then $|G|=p_1\cdots p_m,$ with $p_1,\dots,p_m$ distinct primes. Notice that $V_1(G)=\emptyset$, so $V(G)=W(G)\subseteq V_1(G)$ implies $V(G)=\emptyset$ and this is possible only if $m=1.$ Now assume that $G$ is not cyclic. By assumption $V_d(G)=V(G)=G\setminus \{1\}.$ This is equivalent to say that $d_{\{g\}}(G)=d-1$ for any $1\neq g\in G.$ By \cite[Corollary 2.20, Theorem 2.21]{tra} either $G$ is an elementary abelian $p$-group or there
exist a finite vector space $V$, a nontrivial irreducible soluble subgroup $H$ of $\aut(V)$
and an integer $d>d(H)$ such that
$$G\cong V^{r(d-2)+1} \rtimes H,$$
where $r$ is the dimension of $V$ over $\End_H(V)$ and $H$ acts in the same way on
each of the $r(d-2)+1$ factors. In the first case $d=m$, and we are done.  In the second case, by \cite[Theorem 2]{min}, $m=r(d-2)+1+m(H).$ If $d=2,$ then $H=\langle h\rangle$ is a cyclic group and $G=V\rtimes H.$ Since $H$ is a maximal subgroup, if $h\in \Omega_u(G),$ then $u=2.$ On the other hand, by assumption, $h\in V_m(G)$, and therefore $m=2.$ Assume $d>2.$ This implies $t=r(d-2)+1\geq 2.$ We are going to prove that  $r=1.$ If $r\neq 1,$ then there exist $v_1, v_2\in V$ that are $\End_G(V)$-linearly independent. This implies that the $H$-submodule $W$ of $V^t$ generated by  $w=(v_1,v_2,1,\dots,1)$ is $H$-isomorphic to $V^2.$ As a consequence, if $w\in \Omega_u(G),$ then $u-1\leq m(G/W)=m-2.$ But then $w\notin \Omega_m(G)$, against the assumption $V_m(G)=V(G).$ So $r=1,$ and this implies that $H$ is isomorphic to a subgroup of the multiplicative group of $\End_G(V)$, and consequently it is cyclic. Moreover $t=d-1$ and $m(G)=m(H)+d-1.$ Let $h$ be a generator for of $H.$ Notice that $h\notin \Omega_u(G)$ if $u-1>t=d-1.$ Since, by assumption, $h\in \Omega_u(G),$ it must be $m-1\leq d-1,$ and consequently $m=d.$
\end{proof}
 The finite groups with $d(G)=m(G)$ are described in \cite{ak}. All the finite groups with this property are soluble. So Question \ref{vw} is equivalent to the following.
\begin{question}
Does there exist an unsoluble group $G$ with $V(G)=W(G)?$
\end{question}

Another question that we propose is the following.
\begin{question}
Let $G$ be a finite non-cyclic group. Is the graph $\Delta(G)$ Hamiltonian?
\end{question}
Notice that if $G$ is cyclic, then $\Delta(G)$ is not necessarily Hamiltonian. For example, if $G\cong C_{2\cdot p},$ with $p$ and odd prime, then $\Delta(G)\cong K_{1,p-1}.$
In the case of $\perm(4)$ the affirmative answer to the previous question follows from the Dirac's Theorem, stating that an $n$-vertex graph in which each vertex has degree at least $n/2$ must have a Hamiltonian cycle. However it is not in general true that any vertex of $\Delta(G)$ has degree at least $|V(G)|/2.$ Consider for example $G=\langle a, b \mid a^5, b^4, b^{-1}aba^3\rangle.$ The graph $\Delta(G)$ has 19 vertices, and the degree of $b^2$ in this graph is 8. In any case, we may use Dirac's Theorem in the case of finite nilpotent groups. 

\begin{thm}
	If $G$ is a finite non-cyclic nilpotent group, then $\Delta(G)$ is Hamiltonian.
\end{thm}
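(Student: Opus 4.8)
The plan is to reduce to the case of a finite non-cyclic $p$-group and then to combine Dirac's theorem with an estimate on the minimal degree of $\Delta(G)$. First I would exploit the nilpotent structure: write $G=P_1\times\dots\times P_k$ as the direct product of its Sylow subgroups. If $k\geq 2$ and each $P_i$ is cyclic, then $G$ is a direct product of at least two cyclic groups of coprime order and I would argue that $\Delta(G)$ is (essentially) a complete multipartite graph, which is easily seen to be Hamiltonian, or handle it directly as in Proposition \ref{cyc}. Otherwise some Sylow subgroup $P=P_j$ is non-cyclic; I would like to reduce to $G=P$ itself, using the fact that for nilpotent $G$ a set is a minimal generating set if and only if it projects, Sylow component by Sylow component, to minimal generating sets (and that the Frattini quotient controls everything). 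So the crux is:

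\emph{Claim: if $P$ is a finite non-cyclic $p$-group, then $\Delta(P)$ is Hamiltonian.}

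For this I would pass to $\bar P=P/\frat(P)\cong C_p^d$ with $d=d(P)\geq 2$ (using that $x\in V(P)$ iff $x\frat(P)\in V(\bar P)$ and Lemma \ref{facile}, which says $\Delta(P)$ is a ``blow-up'' of $\Delta(\bar P)$, each vertex replaced by $|\frat(P)|$ copies and edges filled in completely between adjacent classes). A blow-up of a Hamiltonian graph in which every class has the same size is again Hamiltonian, and more importantly I can just estimate degrees directly in $\Delta(P)$. In $\bar P=C_p^d$ the graph $\Delta(\bar P)$ is complete multipartite with $\tfrac{p^d-1}{p-1}$ parts (the nonzero vectors grouped by the line they span), each of size $p-1$; hence $\Delta(P)$ is complete multipartite with the same number of parts, each of size $(p-1)|\frat(P)|$. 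Thus $|V(P)|=(p^d-1)|\frat(P)|$ and every vertex has degree $(p^d-1)|\frat(P)| - (p-1)|\frat(P)| = (p^d-p)|\frat(P)|$. The inequality $(p^d-p)|\frat(P)|\geq \tfrac12(p^d-1)|\frat(P)|$ holds whenever $2(p^d-p)\geq p^d-1$, i.e. $p^d\geq 2p-1$, which is true for all $p$ and all $d\geq 2$. So Dirac's theorem applies and $\Delta(P)$ is Hamiltonian.

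It remains to bootstrap from the non-cyclic Sylow factor to $G$ itself. I would show directly that $\Delta(G)$ is complete multipartite whenever $G$ is nilpotent and non-cyclic: two elements $x=(x_1,\dots,x_k)$ and $y=(y_1,\dots,y_k)$ of $V(G)$ fail to be adjacent precisely when, in $G/\frat(G)\cong \prod_i C_{p_i}^{d_i}$, their images lie on a common ``line'' in some coordinate block while being forced to coincide in all blocks of rank $0$ or $1$ — so the non-adjacency relation is an equivalence relation, giving a complete multipartite structure, and one computes each part has the same size $s$ and the number of parts is $|V(G)|/s$. Then the same degree computation gives every vertex degree $|V(G)|-s$, and $|V(G)|-s\geq |V(G)|/2$ because having a non-cyclic Sylow factor forces the number of parts to be at least $3$ (already $\tfrac{p^2-1}{p-1}=p+1\geq 3$), so $s\leq |V(G)|/3$. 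Dirac's theorem finishes it. The main obstacle I anticipate is the bookkeeping in the reduction to the Frattini quotient for a general nilpotent group — verifying carefully that $V(G)$ is a union of full fibres of $G\to G/\frat(G)$, that $\Delta(G)$ is the corresponding balanced blow-up of $\Delta(G/\frat(G))$ (this is exactly Lemma \ref{facile} together with the Frattini argument from the first lemma of Section 2), and that $\Delta(C_p^d)$ really is complete multipartite with equal parts of size $p-1$; once those structural facts are pinned down the Hamiltonicity is an immediate Dirac estimate.
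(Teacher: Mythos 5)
Your reduction to the Frattini quotient and your treatment of the prime-power case are correct: for a non-cyclic $p$-group $P$, non-adjacency in $\Delta(P)$ really is the equivalence relation ``spanning the same line modulo $\frat(P)$'', so $\Delta(P)$ is complete multipartite with $(p^d-1)/(p-1)\geq 3$ parts of equal size $(p-1)|\frat(P)|$, and your Dirac estimate $(p^d-p)\geq\tfrac12(p^d-1)$ is valid. In that case your computation is exactly the paper's (the paper states the degree as $|G|-|\langle g\rangle\frat(G)|$, which is the same number).

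The gap is in the mixed-prime case: it is \emph{not} true that $\Delta(G)$ is complete multipartite for every non-cyclic nilpotent $G$, i.e.\ non-adjacency on $V(G)$ is not an equivalence relation. Take $G=C_2\times C_3\times C_3$ (so $\frat(G)=1$) and write elements as $(a;v)$ with $a\in C_2$, $v\in C_3^2$. The pair $\{(1;e_1),(1;e_2)\}$ is a minimal generating set (a subgroup of a direct product of coprime-order groups surjecting onto each factor is the whole group), so $(1;e_1)\sim(1;e_2)$. But $(1;0)$ is adjacent to neither: any generating set containing $(1;0)$ and $(1;v)$ with $v\neq 0$ must also contain some $(a;w)$ with $w\notin\langle v\rangle$, and then $\{(1;v),(a;w)\}$ already generates $G$, so $(1;0)$ is redundant and the set is not minimal. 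Thus non-adjacency is not transitive, and moreover the neighbours of $(1;0)$ are exactly the eight elements $(0;v)$, $v\neq0$, so its degree is $8$ while $|V(G)|=17$: the Dirac hypothesis fails, and both your ``balanced complete multipartite'' description and your degree bound break down. (You should be aware that the paper's own proof has the same defect: its assertion that every element of $G\setminus\langle g\rangle\frat(G)$ is adjacent to $g$ is false in this example, and is only guaranteed when $G$ is a $p$-group.) The statement itself survives here --- non-adjacent vertices of $\Delta(C_2\times C_3\times C_3)$ have degree sum at least $8+12=20>17$, so Ore's theorem applies --- but a correct proof must handle the primes-interacting case separately, e.g.\ by replacing Dirac with Ore/Chv\'atal-type conditions or by a more careful analysis of which elements have small degree. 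Also note that your preliminary case ``$k\geq2$ and every Sylow subgroup cyclic'' is vacuous: such a $G$ is cyclic, and indeed $\Delta(C_{2p})\cong K_{1,p-1}$ is not Hamiltonian, so that case cannot be ``handled directly''.
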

\begin{proof}
 Let $g\in V(G)$ and let $H=\langle g\rangle\frat(G).$ Let $n=|V(G)|$ and $d$ the degree of $g\in \Delta(G).$
 Since $G/\frat(G)$ is a direct product of elementary abelian $p$-groups, any element of $G\setminus H$ is adjacent to $g$ in $\Delta(G).$ 
Since $G$ is not cyclic, $H$ is a proper subgroup of $G$, hence $|G|\geq 2|H|$ and therefore
$$d=|G|-|H| \geq \frac{|G|-|\frat(G)|}{2}=\frac{n}{2}.$$ So the conclusion follows from Dirac's theorem.
\end{proof}

Finally, a question that remains open is whether Theorem \ref{conu} remains true is the solubility assumption is removed. 

\begin{question}
Let $G$ be a finite group and $u\in \mathbb N.$  Is $\Delta_u(G)$ a connected graph?
\end{question}


\begin{thebibliography}{99}
\bibitem{bla} S. R. Blackburn,  Sets of permutations that generate the symmetric group pairwise, J. Combin. Theory Ser. A 113 (2006), no. 7, 1572--1581.	

\bibitem{ak} P. Apisa and B. Klopsch, A generalization of the Burnside basis theorem, J. Algebra 400 (2014), 8--16. 

\bibitem{bgk} T. Breuer, R. Guralnick and W. Kantor,  Probabilistic generation of finite simple groups II, J. Algebra 320 (2008), no. 2, 443--494.	
	


	\bibitem{classes}	A. Ballester-Bolinches and L.~M. Ezquerro, {Classes of finite groups}, Mathematics and Its Applications (Springer), vol. 584, Springer, Dordrecht, 2006.
	
		\bibitem{bglmn}
	T. Breuer, R.\,M. Guralnick, A. Lucchini, A. Mar\'oti and G.\,P.~Nagy, 
	Hamiltonian cycles in the generating graphs of finite groups, 
	{Bull. London Math. Soc.} {42} (2010), 621--633.
	
	
	\bibitem{bs} S. Burris and H. P.  Sankappanavar,
	A course in universal algebra,
	Graduate Texts in Mathematics, 78. Springer-Verlag, New York-Berlin, 1981. xvi+276 pp. ISBN: 0-387-90578-2.

	
\bibitem{bucr} T. Burness and E. Crestani, On the generating graph of direct powers of a simple group, J. Algebraic Combin. 38 (2013), no. 2, 329--350.

\bibitem{caca} P. J. Cameron and P. Cara, 
Independent generating sets and geometries for symmetric groups,
J. Algebra 258 (2002), no. 2, 641--650.

\bibitem{tra} P. Cameron, A. Lucchini, C. Roney-Dougal, 
Generating sets of finite groups,
Trans. Amer. Math. Soc. 370 (2018), no. 9, 6751–6770.

\bibitem{CL3} E. Crestani and A. Lucchini, The generating graph of a finite soluble groups,
Israel J. Math. 207 (2015), no. 2, 739--761.

\bibitem{cl} E. Crestani and A. Lucchini,  The graph of the generating $d$-tuples of a finite soluble group and the swap conjecture, J. Algebra 376 (2013), 79--88.






\bibitem{ds} M. Di Summa and A. Lucchini,
The swap graph of the finite soluble groups,
J. Algebraic Combin. 44 (2016), no. 2, 447–454.

\bibitem{fu} F. Erdem, 
On the generating graphs of symmetric groups,
J. Group Theory 21 (2018), no. 4, 629--649.
	
	




	\bibitem{LS} M. Liebeck and A. Shalev, {Simple groups, probabilistic methods,
	and a conjecture of {K}antor and {L}ubotzky}, J. Algebra {184} (1996),
no.~1, 31--57.

\bibitem{old} A. Lucchini,  Generators and minimal normal subgroups. Arch. Math. (Basel) 64 (1995), no. 4, 273--276. 

\bibitem{min} A. Lucchini,  The largest size of a minimal generating set of a finite group. Arch. Math, (Basel) 101 (2013), no. 1, 1--8.

	\bibitem{diam} A. Lucchini, The diameter of the generating graph of a finite soluble group, {J. Algebra} {492}, (2017), 28--43.
	
\bibitem{planar} A. Lucchini, Finite groups with planar generating graph, Australas. J. Combin. 76 (2020), part 1, 220--225.

\bibitem{euler} A. Lucchini and C. Marion, 
Alternating and symmetric groups with Eulerian generating graph,
Forum Math. Sigma 5 (2017), e24, 30 pp.

	\bibitem{LM2} A. Lucchini and A. Mar\'oti, On the clique number of the generating graph of a finite group, {Proc. Amer. Math. Soc.} {137}, No. 10, (2009), 3207--3217.
	
		\bibitem{lm}
	A. Lucchini and A. Mar\'oti,
	Some results and questions related to the generating graph of a
	finite group,
	in {Ischia group theory 2008}, 183--208, 
	World Sci. Publ., Hackensack, NJ, 2009.	
	
	
		\bibitem{LMRD}
	A. Lucchini, A. Mar\'oti, and C.\,M.~Roney-Dougal, 
	On the generating graph of a simple group, 
	{J. Austral. Math. Soc.} {103}, (2017), 91-103.  	

	
	
	
	
	
	
\end{thebibliography}
\end{document}